\newtheorem{theorem}{Theorem}[section]
\newtheorem{lemma}[theorem]{Lemma}
\newtheorem{proposition}[theorem]{Proposition}
\newtheorem{conjecture}[theorem]{Conjecture}
\theoremstyle{remark}
\newtheorem*{remark}{Remark}
\newcommand{\exaff}{\mathrm{ex}_{\mathrm{aff}}}
\newcommand{\omegaaff}{\omega_{\mathrm{aff}}}
\newcommand{\rankaff}{\rank_{\mathrm{aff}}}
\newcommand{\Raff}[1]{R_{\mathrm{aff } #1}}
\newcommand{\homaff}{\mathrm{hom}_{\mathrm{aff}}}
\newcommand{\autaff}{\mathrm{Aut}_{\mathrm{aff}}}
\newcommand{\sqbinom}[2]{\begin{bsmallmatrix} #1 \\ #2 \end{bsmallmatrix}}
\newcommand{\rank}{\mathrm{rank}}
\newcommand{\spn}{\mathrm{span}}
\let\epsilon=\varepsilon
\begin{document}

\setstretch{1.27}

\title{Vector space Ramsey numbers and weakly Sidorenko affine configurations}

\author{Bryce~Frederickson \thanks{Department of Mathematics, Emory University, 
Atlanta, Georgia 30322. Email: {\tt bfrede4@emory.edu}.}
\and
Liana~Yepremyan \thanks{Department of Mathematics, Emory University, 
Atlanta, Georgia 30322. Email: {\tt lyeprem@emory.edu}. The research of the second author is  supported by NSF grant 2247013: Forbidden and Colored Subgraphs.}}

\date{}

\maketitle

\begin{abstract}
For $B \subseteq \mathbb F_q^m$, the $n$-th affine extremal number of $B$ is the maximum cardinality of a set $A \subseteq \mathbb F_q^n$ with no subset which is affinely isomorphic to $B$. Furstenberg and Katznelson proved that for any $B \subseteq \mathbb F_q^m$, the $n$-th affine extremal number of $B$ is $o(q^n)$ as $n \to \infty$. By counting affine homomorphisms between subsets of $\mathbb F_q^n$, we derive new bounds and give new proofs of some previously known bounds for certain affine extremal numbers. At the same time, we establish corresponding supersaturation results. We connect these bounds to certain Ramsey-type numbers in vector spaces over finite fields. For $s,t \geq 1$, let $R_q(s,t)$ denote the minimum $n$ such that in every red-blue coloring of the one-dimensional subspaces of $\mathbb F_q^n$, there is either a red $s$-dimensional subspace or a blue $t$-dimensional subspace of $\mathbb F_q^n$. The existence of these numbers is a special case of a well-known theorem of Graham, Leeb, Rothschild. We improve the best known upper bounds on $R_2(2,t)$, $R_3(2,t)$, $R_2(t,t)$, and $R_3(t,t)$.
\end{abstract}

\section{Introduction}
We consider bounds for Ramsey-type and Tur\'an-type problems in the setting of vector spaces over finite fields. In this paper, we use $\sqbinom{V}{t}$ to denote the collection of all $t$-dimensional linear subspaces of a vector space $V$. The following theorem is a special case of a classical theorem of Graham, Leeb, and Rothschild \cite{GLR72}, which establishes the existence of the Ramsey numbers we consider.

\begin{theorem}[Graham, Leeb, Rothschild]\label{thm:GLR72}
Let $\mathbb F_q$ be any finite field. For any positive integers $t_1, \ldots, t_k$, there exists a minimum $n =: R_q(t_1, \ldots, t_k)$ such that for every $k$-coloring $f : \sqbinom{\mathbb F_q^n}{1} \to [k]$ of the $1$-dimensional linear subspaces of $\mathbb F_q^n$, there exist $i \in [k]$ and a linear subspace $U \subseteq \mathbb F_q^n$ of dimension $t_i$, such that $\sqbinom{U}{1}$ is monochromatic in color $i$.
\end{theorem}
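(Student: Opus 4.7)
The proof proceeds by two nested inductions: an outer induction on $k$ reducing to the two-color case, and an inner induction on $t_1+t_2$ within that case. For the outer reduction, assume inductively that $N := R_q(t_1,\ldots,t_{k-1})$ exists. Apply the two-color version to the coloring in which colors $1,\ldots,k-1$ are merged into one, with parameters $(N,t_k)$. The output is either a monochromatic $t_k$-dimensional subspace in color $k$, or a subspace of dimension $N$ avoiding color $k$, on which the $(k-1)$-color inductive hypothesis produces the desired structure.

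For the two-color case $R_q(s,t)$, I would induct on $s+t$. The base case $\min(s,t)=1$ takes $n = \max(s,t)$: either some line realizes the ``small-target'' color, or the whole space is monochromatic in the other. For the inductive step with $s,t \geq 2$, given a $2$-coloring $c$ of the lines of $\mathbb{F}_q^n$ for sufficiently large $n$, I would iteratively construct a descending chain $\mathbb{F}_q^n = V_0 \supsetneq V_1 \supsetneq \cdots$ together with linearly independent lines $\ell_1,\ell_2,\ldots$, where $\ell_i \subseteq V_{i-1}$ and $V_i$ is a chosen linear complement of $\ell_i$ inside $V_{i-1}$. At each step, classify the lines of $V_{i-1}$ transverse to $\ell_i$ by the coloring pattern induced on the $2$-dimensional subspace they span with $\ell_i$ --- there are only boundedly many ($\leq 2^{q+1}$) such patterns --- and pass to a large $V_i$ on which this classification is constant. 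After enough iterations, pigeonhole on the colors of the $\ell_i$ and the common incidence pattern produces the monochromatic subspace, invoking the inductive hypothesis $R_q(s-1,t)$ or $R_q(s,t-1)$ as appropriate to complete the smaller side.

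The main obstacle is controlling the quantitative blowup of the extraction. Each pigeonhole step loses dimension logarithmic in the number of classification patterns, and iterating across the nested inductions on $s+t$ and on $k$ yields tower-/Ackermann-type bounds on $R_q(t_1,\ldots,t_k)$. Preserving the \emph{linear} structure of the final monochromatic subspace (rather than a combinatorial or merely affine substructure) forces us to work with transverse linear complements throughout, which is what distinguishes this from a direct application of Hales--Jewett and explains why the sharpened bounds for small $(s,t,q)$ developed later in the paper require the entirely different, density/supersaturation-based approach rather than a refinement of this iterative argument.
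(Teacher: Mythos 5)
The paper does not prove this theorem; it cites it as a classical result of Graham, Leeb, and Rothschild \cite{GLR72}, so there is no in-paper argument to compare against. Taking your sketch on its own terms: the outer reduction from $k$ colors to two via $R_q(t_1,\ldots,t_k) \le R_q\big(R_q(t_1,\ldots,t_{k-1}),\,t_k\big)$ is correct, and the base case $R_q(1,t)=t$ is sound. The genuine gap is in the two-color inductive step.

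The instruction to ``pass to a large $V_i$ on which this classification is constant'' is itself an instance of the theorem you are proving: you have a coloring of the lines of a subspace in at most $2^{q+1}$ colors and you are asking for a subspace of prescribed (still large) dimension on which that coloring is constant --- that is $R_q(d;\,2^{q+1})$. This is not controlled by the induction on $s+t$ (the required $d$ does not shrink) nor by the outer induction on $k$ (which only goes from $k$ to $k-1$), so the argument is circular. The step works in the graph Ramsey proof that this mimics because there you only need a large \emph{subset} of the remaining vertices, and every color class of a partition is a subset; here you need a \emph{subspace}, and a color class of a line-coloring need not contain any sizable subspace --- \Cref{thm:Bose Burton} only guarantees a $t$-space inside a class whose density exceeds roughly $1 - q^{-(t-1)}$, far above the $2^{-(q+1)}$ that pigeonhole supplies. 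This obstruction is precisely why the known proofs route through the Hales--Jewett theorem or the Graham--Rothschild parameter-set machinery, where combinatorial subspaces restrict gracefully; the iterative extraction as written cannot close on itself, and you would need to either invoke Hales--Jewett explicitly or restructure the induction entirely.
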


In the case $t_1= \cdots = t_k = t$, we write $R_q(t_1, \ldots, t_k) = R_q(t ;k)$. The bounds for $R_q(t_1, \ldots, t_k)$ implied by early proofs of Theorem \ref{thm:GLR72} (see \cite{GLR72}, \cite{Sp79}) are quite large due to repeated use of the Hales-Jewett Theorem \cite{HJ63}. In the case $q=2$, the problem can be reduced to the disjoint unions problem for finite sets, considered by Taylor \cite{T81}, which gives the following bound. 
\begin{theorem}[Taylor]\label{thm:T81}
The number $R_2(t;k)$ is at most a tower of height $2k(t-1)$ of the form
\[R_2(t;k) \leq k^{3^{k^{\iddots^3}}}.\]
\end{theorem}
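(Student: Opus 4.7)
The plan is to reduce the vector-space Ramsey problem over $\mathbb F_2$ to a Ramsey problem for disjoint unions of finite sets, and then invoke Taylor's theorem in that setting.

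The key first step is to set up the dictionary between $\mathbb F_2^n$ and the power set $2^{[n]}$. Each nonzero vector $v \in \mathbb F_2^n$ is identified with its support $\mathrm{supp}(v) \subseteq [n]$, a nonempty subset, and vector addition corresponds to symmetric difference of supports. Since a $1$-dimensional subspace of $\mathbb F_2^n$ is determined by its unique nonzero vector, this gives a bijection $\sqbinom{\mathbb F_2^n}{1} \leftrightarrow 2^{[n]} \setminus \{\emptyset\}$, so that a $k$-coloring $\chi$ of the one-dimensional subspaces is the same data as a $k$-coloring of the nonempty subsets of $[n]$.

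Next I would observe that any $t$ \emph{pairwise disjoint} nonempty sets $A_1, \ldots, A_t \subseteq [n]$ have linearly independent characteristic vectors, and the $2^t-1$ nonempty unions $\bigcup_{i \in S} A_i$ (for $\emptyset \neq S \subseteq [t]$) are exactly the supports of the $2^t-1$ nonzero vectors of the $t$-dimensional subspace they span---disjointness is what makes symmetric differences coincide with unions. Consequently, any $k$-coloring of $\sqbinom{\mathbb F_2^n}{1}$ with no monochromatic $t$-dimensional subspace also, viewed through $\chi$, contains no family of $t$ pairwise disjoint nonempty sets whose $2^t-1$ nonempty unions are all of the same color. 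It therefore suffices to bound the smallest $N$ such that every $k$-coloring of $2^{[N]} \setminus \{\emptyset\}$ must produce such a disjoint family. This is precisely the disjoint-unions Ramsey number treated by Taylor~\cite{T81}, whose upper bound is a tower of height $2k(t-1)$, and combining this with the reduction gives the stated bound on $R_2(t;k)$.

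The reduction itself is essentially a bookkeeping exercise; the real content is Taylor's theorem. Its proof proceeds by constructing the disjoint sets $A_1, \ldots, A_t$ one at a time via an iterated Hales-Jewett argument: once $A_1, \ldots, A_{j-1}$ are fixed, one applies Hales-Jewett to a derived coloring on an ambient ground set to extract $A_j$ while preserving monochromaticity of the partial unions already built. I expect the main obstacle to be exactly this control step---tracking how the effective number of colors blows up from the prior commitments (which accounts for the factor $2k$ per stage) and how many Hales-Jewett invocations are needed per new $A_j$ (which accounts for the $t-1$ iterations), yielding the overall tower height $2k(t-1)$.
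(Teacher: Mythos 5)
Your reduction is correct and matches the paper's treatment: the paper states this result as a citation to Taylor, preceded only by the one-sentence remark that the $q=2$ problem reduces to the disjoint unions problem, and your dictionary between nonzero vectors of $\mathbb F_2^n$ and nonempty subsets of $[n]$ (with disjointness forcing symmetric differences to coincide with unions, so that a monochromatic disjoint-union family spans a monochromatic $t$-dimensional subspace) is exactly that reduction. One caveat on your closing paragraph: you speculate that Taylor's proof is an iterated Hales--Jewett argument, but the paper explicitly contrasts Taylor's single-tower bound with the much larger bounds obtained from repeated Hales--Jewett applications in the earlier Graham--Leeb--Rothschild-type proofs, so Taylor's argument must in fact sidestep Hales--Jewett (it is a more direct combinatorial iteration). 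Since both you and the paper defer the substantive content to Taylor's paper, this misattribution of method doesn't create a gap in the reduction itself.
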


For comparison, lower bounds for $R_2(t;k)$ attained from applying the techniques from \cite{A16} such as the Lov\'asz Local Lemma to a uniform random coloring are only on the order of $\Omega\left(\frac{2^t}{t}\log_2 k\right).$

We improve the bound of Theorem \ref{thm:T81} by bringing the height of the tower down to $(k-1)(t-1) + o(t)$, and we prove a corresponding bound over $\mathbb F_3$. 

\begin{theorem}\label{thm:diagonal Fq k colors}
    There exists a constant $C_0 \approx 13.901$ such that for $\sigma_2 = 2$ and $\sigma_3 = C_0$, the following holds for $q \in \{2,3\}$. For any $k \geq 2$ and any $t_k \geq \cdots \geq t_1 \geq 2$, $R_q(t_1, \ldots, t_k)$ is at most a tower of height $\sum_{i = 1}^{k-1}(t_i-1) + 1$ of the form
    \[R_q(t_1, \ldots, t_k) \leq \sigma_q^{\sigma_q^{\iddots^{\sigma_q^{3t_k}}}}.\]
\end{theorem}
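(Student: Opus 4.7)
The plan is to induct on the number of colors $k$. The base case $k = 1$ is immediate since $R_q(t_1) = t_1 \leq \sigma_q^{3 t_1}$. For the inductive step from $k - 1$ colors to $k$ colors, let $f : \sqbinom{\mathbb F_q^n}{1} \to [k]$ be any $k$-coloring with $n$ equal to the claimed upper bound, and set $A = f^{-1}(1) \subseteq \sqbinom{\mathbb F_q^n}{1}$ (``color~$1$''). Either $A$ already contains $\sqbinom{U_1}{1}$ for some $t_1$-dim subspace $U_1 \subseteq \mathbb F_q^n$, in which case we are done, or there exists a subspace $U \subseteq \mathbb F_q^n$ of dimension $R_q(t_2, \ldots, t_k)$ with $\sqbinom{U}{1}$ disjoint from $A$, in which case the inductive hypothesis applied to $f|_{\sqbinom{U}{1}}$ yields the desired monochromatic subspace in one of the remaining colors.

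This reduces everything to a two-color bound of the form: for $q \in \{2, 3\}$ and any $A \subseteq \sqbinom{\mathbb F_q^n}{1}$ that avoids $s$-dim subspaces, if $n$ is at least a tower of height $s - 1$ of $\sigma_q$'s topped by a value slightly larger than $m$, then $\sqbinom{\mathbb F_q^n}{1} \setminus A$ contains $\sqbinom{U}{1}$ for some $m$-dim subspace $U$. Combined with the $k$-color reduction above, this adds exactly $t_1 - 1$ tower levels per color removed, yielding the claimed total height $\sum_{i=1}^{k-1}(t_i - 1) + 1$ with top $3 t_k$. The two-color bound itself I would prove by a second induction on $s$, driven by the affine extremal number and supersaturation machinery developed earlier in the paper: the weakly Sidorenko counting inequality forces any such $A$ to have density so small on $\sqbinom{\mathbb F_q^n}{1}$ that a density-increment or subspace-selection argument produces a subspace of the correct dimension on which $A$ additionally avoids $(s-1)$-dim subspaces, consuming one tower level per reduction in $s$.

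The hard part will be calibrating the constants $\sigma_q$ sharply enough that each inductive step gains exactly one tower level, with no parasitic additive or multiplicative slack. The value $\sigma_2 = 2$ should reflect the natural density-halving per $\mathbb F_2$-coordinate restriction, whereas $\sigma_3 = C_0 \approx 13.901$ must emerge from an explicit optimization of the weakly Sidorenko affine counting inequality over $\mathbb F_3$ for the specific configuration that governs avoidance of a subspace. Ensuring this slackless accumulation across all $\sum_{i=1}^{k-1}(t_i - 1) + 1$ tower levels, so that the top of the tower can be taken to be cleanly $3 t_k$ rather than a larger polynomial in $t_k$, is the delicate bookkeeping step; it should go through cleanly once the affine extremal bounds are invoked in their supersaturated form.
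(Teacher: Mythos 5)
Your high-level plan matches the paper's: first a two-color bound $R_q(s,t)\le$ tower of height $s$ topped by $2t$ (the paper's \Cref{thm:diagonal Fq two colors}, proved by induction on $s$), then a reduction from $k$ colors to two by a simple recursion. Your peel-one-color version $R_q(t_1,\dots,t_k)\le R_q(t_1,R_q(t_2,\dots,t_k))$ unfolds to the same nested expression as the paper's $R_q(t_1,\dots,t_{k-2},R_q(t_{k-1},t_k))$, and the tower-height bookkeeping closes up the same way in either formulation, since the multiplicative and additive slack (the $2t$ vs.\ $3t$ at the top, the factor $t$ in $n=t\sigma_q^r$) is absorbed one exponential level at a time.

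The gap is in your sketch of the two-color bound, where the logic is inverted. You claim the weakly-Sidorenko counting inequality forces $A$ (the color class avoiding linear $s$-spaces) to have small density, but no such conclusion is available: a projectively determined set avoiding linear $s$-spaces can have density as large as roughly $1-q^{1-s}$ (the Bose-Burton extremal set). The paper's argument instead first applies \Cref{thm:Bose Burton} to the \emph{other} color class --- the one avoiding linear $t$-spaces, where $t$ is the large dimension --- to conclude that \emph{that} class is small, so the $s$-avoiding class $A$ has size at least $q^{n-t+1}$. Only then does the affine-extremal/weakly-Sidorenko machinery (\Cref{thm:supersaturation of affine spaces}, built from $\sigma_q$-weakly-Sidorenko lines via \Cref{thm:Sidorenko product}) enter: it shows that this \emph{large} set $A$ contains an affine $r$-space with $r=R_q(s-1,t)$. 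One translates that affine space to a linear $r$-space, applies the inductive hypothesis inside it to find a monochromatic $(s-1)$-space in the same color as $A$, and then lifts it to a full linear $s$-space using the projectively determined property together with a point of the affine $r$-space, yielding a contradiction. So the density bound is not ``$A$ is small'' but ``$A$ is large''; the affine extremal machinery is used to find structure in the large color, not to bound its size; and Bose-Burton, which your sketch omits entirely, is an essential ingredient --- the weakly-Sidorenko machinery says nothing about sets avoiding \emph{linear} (as opposed to affine) subspaces, and without Bose-Burton there is no density lower bound to feed into \Cref{thm:supersaturation of affine spaces}.
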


More recently, Nelson and Nomoto \cite{NN21} considered the off-diagonal version of this problem over $\mathbb F_2$ with two colors while investigating $\chi$-boundedness of certain classes of binary matroids, and they proved the following bound.
\begin{theorem}[Nelson, Nomoto] \label{thm:Nelson Nomoto}
For every $t \geq 2$,
\[R_2(2,t) \leq (t+1)2^t.\]
\end{theorem}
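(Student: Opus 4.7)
The plan is to induct on $t$, with $f(t) := (t+1) 2^t$ satisfying the recursion $f(t) = 2 f(t-1) + 2^t$ and base $f(2) = 12$. For $t = 2$ one verifies $R_2(2, 2) \leq 3 \leq f(2)$ directly, since the Fano plane (as a $3$-uniform hypergraph on $\sqbinom{\mathbb{F}_2^3}{1}$) has chromatic number $3$, so any $2$-coloring of its points contains a monochromatic line.

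For the inductive step, suppose $R_2(2, t-1) \leq f(t-1) = t \cdot 2^{t-1}$ and consider a $2$-coloring of $\sqbinom{\mathbb{F}_2^n}{1}$ with $n \geq f(t)$ and no red $2$-subspace; let $R$ denote the set of red vectors. First I would apply the inductive hypothesis to find a blue $(t-1)$-dimensional subspace $U \subseteq \mathbb{F}_2^n$. Then I would try to extend $U$ to a blue $t$-subspace by locating a vector $v \notin U$ whose coset $v + U$ is entirely blue; the span $\langle U, v \rangle$ would then be a blue $t$-subspace. Since each red vector blocks at most one of the $2^{n-t+1}$ cosets of $U$, this extension succeeds whenever $|R| \leq 2^{n-t+1} - 2$.

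The hard case is when $|R| > 2^{n-t+1}$, i.e., the red set has density at least roughly $2^{-t}$ while remaining sum-free (no three distinct red vectors summing to $0$). Here I would invoke a stability result for dense sum-free subsets of $\mathbb{F}_2^n$ — such sets must concentrate near a coset of a hyperplane — to identify a hyperplane $H^* \subseteq \mathbb{F}_2^n$ with $|R \cap H^*|$ very small, restrict to $H^*$ (losing one dimension but sharply reducing $|R|$), and iterate until the greedy extension threshold is met.

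The main obstacle will be calibrating the stability step sharply enough to match the exact bound $(t+1)2^t$: a crude structural argument would yield a much weaker recursion, and matching $f(t) = 2f(t-1) + 2^t$ seems to require either a quantitatively tight sum-free stability theorem for $\mathbb{F}_2^n$ or an alternative approach that uses the \emph{extra} $2^t$ dimensions to construct the $t$-subspace more directly, for example by finding two parallel blue $(t-1)$-subspaces simultaneously and stacking them into a $t$-subspace without ever needing the structural dichotomy.
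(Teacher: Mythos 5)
Your base case and the ``greedy extension'' observation are both correct, and the recursion $f(t) = 2f(t-1) + 2^t$ is a natural target. But the plan has a genuine gap, and the easy/hard dichotomy actually collapses: if the coloring has no blue $t$-space, then \Cref{thm:Bose Burton} already forces $|R| \geq 2^{n-t+1}$ (the extremal configuration is a linear $(n-t+1)$-space minus $0$, which is ruled out since $\omega(R) < 2 \leq n-t+1$), so the case $|R| \leq 2^{n-t+1}-2$ never occurs in the situation you need to handle. The entire burden therefore falls on the ``hard case,'' and the tool you propose there does not apply. Stability theorems for sum-free subsets of $\mathbb F_2^n$ (Davydov--Tombak and its refinements) only say that a sum-free set must lie inside an affine hyperplane once its density exceeds a fixed constant (on the order of $1/4$). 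At density $\alpha \approx 2^{1-t}$ you are far below that threshold, and sum-free sets of this density need not concentrate near any single hyperplane coset. Moreover, even if one could restrict to a hyperplane $H^*$ with $R \cap H^*$ somewhat smaller, the relevant greedy threshold $2^{(n-1)-t+1}$ also halves, so the density of the residual red set does not drop, and the iteration has no leverage; you would need a new stability input at the same low density at every step.

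The argument the paper attributes to Nelson and Nomoto is quite different and avoids any structural dichotomy. It first reformulates the problem: $R_2(2,t) \leq m_2(t)$, where $m_2(t)$ is the least $n$ such that every $A \subseteq \mathbb F_2^n$ of size $\geq 2^{n-t+1}$ has $\omega(A+A) \geq t$ (\Cref{lem:extremal implies Ramsey}; the Bose--Burton step gives $|R| \geq 2^{n-t+1}$, and then a $t$-dimensional subspace of directions inside $R+R$ must contain a direction $d \in R$, which together with $\omega(R) < 2$ yields a red $2$-space). Then it invokes Sanders' theorem (\Cref{thm:Sanders}), which for any $A$ of density $\alpha < 1/2$ guarantees $\omega(A+A) \geq n - \lceil n/\log_2\frac{2-2\alpha}{1-2\alpha}\rceil$; plugging in $\alpha = 2^{1-t}$ and $n = (t+1)2^t$ gives $\omega(A+A) \geq t$, hence $m_2(t) \leq (t+1)2^t$. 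The key analytic input is a Bogolyubov/Sanders-type sumset theorem, not a sum-free structure theorem, and it holds at every density $\alpha < 1/2$, which is exactly what is needed here.
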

In this case, standard probabilistic arguments give a lower bound for $R_2(2,t)$ which is only linear in $t$. Nelson and Nomoto asked if a subexponential upper bound is possible. While the answer to that question remains to be seen, we provide the following exponential improvement for $R_2(2,t)$ and similarly give the first exponential upper bound for $R_3(2,t)$.
\begin{theorem} \label{thm:off-diagonal Fq}
There exists a constant $C_0 \approx 13.901$ such that as $t \to \infty$,
\begin{align*}
    R_2(2,t) &= O\big(t 6^{t/4}\big); \\
    R_3(2,t) &= O\big(t C_0^t\big).
\end{align*}
\end{theorem}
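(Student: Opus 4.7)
My plan is to derive the bound on $R_q(2,t)$ by combining the affine extremal number / supersaturation results established earlier in the paper with an induction on $t$. The key observation is that in any red--blue coloring of $\sqbinom{\mathbb F_q^n}{1}$ with no red $2$-dimensional subspace, the union of the red $1$-dimensional subspaces is a scalar-closed set $R \subseteq \mathbb F_q^n$ containing no full $2$-dimensional linear subspace. This is an affine extremal problem for a specific configuration, so by the weakly Sidorenko bound proved earlier, one gets $|R| \leq C_q \cdot \beta_q^n$ for some explicit $\beta_q < q$. This bound is strictly better than the naive projective cap bound $|R| \leq q^{n-1}$, and it is precisely this improvement that should drive the exponential improvement over Nelson--Nomoto.

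For the inductive step, suppose we have a coloring on $\mathbb F_q^n$ with no red $2$-dimensional subspace and no blue $t$-dimensional subspace. I would locate a hyperplane $H \subseteq \mathbb F_q^n$ in which $|R \cap H|$ is small -- ideally strictly smaller than $|R|/q$, via an averaging argument reinforced by supersaturation (if every hyperplane had high red density, there would be too many red $2$-dimensional subspaces in $R$). Applying the inductive hypothesis on $R_q(2, t-1)$ inside $H \cong \mathbb F_q^{n-1}$ yields a blue $(t-1)$-dimensional subspace $W \subseteq H$. I would then extend $W$ to a blue $t$-dimensional subspace $W + \ell$ by choosing a suitable blue $1$-dimensional $\ell \not\subseteq H$: the remaining $1$-dimensional subspaces of $W + \ell$ outside $W \cup \{\ell\}$ must all be blue, and a counting argument using the sparsity of $R$ together with the supersaturation bound shows that a good $\ell$ exists, since otherwise we could extract many red $2$-dimensional subspaces.

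Unwinding yields a recursion of the form $R_q(2,t) \leq c_q \cdot R_q(2,t-1) + O(c_q^t)$, or possibly a multi-step analogue, giving the claimed $O(t \cdot c_q^t)$ bound. The specific values $c_2 = 6^{1/4}$ and $c_3 = C_0 \approx 13.901$ should emerge by optimizing the supersaturation constants $\beta_q$; the fractional exponent $t/4$ for $q=2$ strongly suggests a block-induction that gains a factor of $6$ only every four dimensions, likely reflecting that the relevant affine configuration lives in $\mathbb F_2^4$ (where, for example, $|GL_2(\mathbb F_2)| = 6$). The main obstacle is the quantitative hyperplane-restriction step: naive averaging only gives a factor-of-$q$ reduction in $|R|$ per dropped dimension, which merely recovers the Nelson--Nomoto base $q$. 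Achieving the strict improvement to a base $c_q < q$ requires genuinely exploiting the weakly Sidorenko inequality to force a strictly sparser red set on some hyperplane, and carefully tracking the constants to pin down $6^{1/4}$ and $C_0$ is the most delicate part.
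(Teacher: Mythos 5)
Your proposal takes a genuinely different route from the paper, but it contains a central error that breaks the argument. You assert that because the red set $R$ (the union of red $1$-dimensional subspaces) is scalar-closed and contains no linear $2$-dimensional subspace, the weakly Sidorenko machinery forces $|R| \leq C_q \beta_q^n$ with $\beta_q < q$. This is false: avoiding a linear $2$-space is exactly the condition that the corresponding projective point set is a cap, and projective caps can be dense. Over $\mathbb F_2$, for example, taking $R$ to be all nonzero vectors with first coordinate $1$ gives a scalar-closed set of density $1/2$ with no $2$-dimensional linear subspace, so the exponent cannot be improved below $n$. The Bose--Burton bound (\Cref{thm:Bose Burton}) is tight here and gives only $|R| \leq q^n - q^{n-1}$; no weakly Sidorenko input will push $R$ below density $\Theta(1)$.

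The paper's argument runs in precisely the opposite direction. Since the \emph{blue} set $B$ has no linear $t$-space, Bose--Burton forces the \emph{red} set to be \emph{large}: $|R| \geq q^{n-t+1}$. The reformulation in \Cref{lem:extremal implies Ramsey} then reduces $R_q(2,t)$ to the quantity $m_q(t)$: the least $n$ so that every $A \subseteq \mathbb F_q^n$ of size $\geq q^{n-t+1}$ has direction-set dimension $\omega^\to(A) \geq t$. A red direction $d \in R^\to \cap R$, combined with scalar-closure, produces a red linear $2$-space, which is the contradiction. The exponential gain over Nelson--Nomoto comes from the observation that one does not need a full affine $(t-1)$-space inside $A$; it suffices to find an affine copy of a much smaller configuration $B$ with $\omega^\to(B) \geq t$. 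For $q=2$ the relevant $B$ is $C_6^{\lceil t/4 \rceil}$, where $C_6 \subseteq \mathbb F_2^4$ is the length-$6$ circuit: it is Sidorenko (\Cref{lem:affine lines Sidorenko}) with $|C_6| = 6$ and $\omega^\to(C_6) = 4$, which is exactly where the base $6^{1/4}$ comes from (not from $|GL_2(\mathbb F_2)|$). For $q=3$ one uses $\mathbb F_3^t$ directly, which is $C_0^t$-weakly Sidorenko. Your proposed hyperplane induction does not have the direction-set reformulation available, and as you yourself flag, naive hyperplane restriction only gains a factor of $q$ per step, which merely recovers the base $q$ and cannot produce the improved exponent without the $\omega^\to$ reformulation or an equivalent idea.
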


These improved bounds come from some simple observations about affine extremal numbers and their supersaturation properties, which are analogous to bipartite Tur\'an numbers in graph theory. The $n$-th \emph{affine extremal number} of a family $\mathcal B$ of affine configurations $\{B_i \subseteq \mathbb F_q^{m_i}\}_{i \in I}$, denoted $\exaff(n,\mathcal B)$, is the maximum size of a subset $A \subseteq \mathbb F_q^n$ with no affine copy of any $B_i$ (see Section \ref{sec:preliminaries} for a more complete definition). The asymptotic study of affine extremal numbers dates back at least to the following theorem of Furstenberg and Katznelson \cite{FK85}.

\begin{theorem}[Furstenberg, Katznelson]\label{thm:Furstenberg Katznelson}
    Let $\mathbb F_q$ be any finite field. For any $t \geq 0$,
    \[\exaff(n,\mathbb F_q^t) = o(q^n) \qquad \text{as} \qquad n \to \infty.\]
\end{theorem}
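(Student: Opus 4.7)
The plan is to derive the theorem from the density Hales--Jewett theorem (DHJ) of Furstenberg and Katznelson: for every $\delta > 0$ and alphabet size $q$, there exists $N_0 = N_0(\delta, q)$ such that every $A \subseteq [q]^N$ with $|A| \geq \delta q^N$ and $N \geq N_0$ contains a combinatorial line.

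First I would bootstrap DHJ to a multidimensional form via the standard product trick: identify $[q]^{tN'}$ with $([q]^t)^{N'}$ and apply DHJ with the new alphabet $[q]^t$, so that any density-$\delta$ set $A \subseteq ([q]^t)^{N'}$ with $N' \geq N_0(\delta, q^t)$ contains a combinatorial line. Unpacked in the original $[q]$-alphabet, this line becomes a set of the form
\[\Big\{v + \sum_{i=1}^{t} c_i \chi_{S_i} : (c_1, \ldots, c_t) \in [q]^t\Big\} \subseteq [q]^{tN'},\]
where $S_1, \ldots, S_t \subseteq [tN']$ are pairwise disjoint nonempty ``wildcard blocks'', $\chi_{S_i}$ is the indicator vector of $S_i$, and $v$ is supported on $[tN'] \setminus \bigcup_i S_i$. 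Identifying $\mathbb F_q$ with $\{0, 1, \ldots, q-1\}$ as sets, this set is exactly an affine copy of $\mathbb F_q^t$ inside $\mathbb F_q^{tN'}$, because $\chi_{S_1}, \ldots, \chi_{S_t}$ have disjoint nonempty supports and so are linearly independent over $\mathbb F_q$, and the parametrization $(c_1, \ldots, c_t) \mapsto v + \sum_i c_i \chi_{S_i}$ is an injective affine map.

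To handle arbitrary $n$, given $A \subseteq \mathbb F_q^n$ with $|A| \geq \delta q^n$ and $n \geq tN_0(\delta, q^t)$, I would write $n = tN' + r$ and split $\mathbb F_q^n = \mathbb F_q^{tN'} \times \mathbb F_q^r$; by averaging there exists $w \in \mathbb F_q^r$ with fiber $A_w = \{u : (u,w) \in A\}$ of density at least $\delta$, and the previous step applied inside $A_w$ produces an affine copy of $\mathbb F_q^t$ in $A$. Contrapositively, $\exaff(n, \mathbb F_q^t) < \delta q^n$ for all sufficiently large $n$, which gives the claimed $o(q^n)$ bound since $\delta > 0$ is arbitrary.

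The main obstacle is that DHJ itself is the deep input; this argument only packages it correctly. A fully self-contained proof would require either the original ergodic argument of Furstenberg and Katznelson or the combinatorial Polymath proof, both of which give only extremely weak quantitative control on $N_0$. The quantitative improvements in the remainder of the paper come not from reproving DHJ but from counting affine homomorphisms for specific configurations $B$.
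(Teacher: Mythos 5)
Your derivation is correct, and it follows the route the paper itself indicates: the paper cites this result of Furstenberg--Katznelson without reproving it, and explicitly remarks that it ``is immediate'' from the density Hales--Jewett theorem of \cite{FK91}, which is exactly the input you use. The details you fill in are sound --- the blocked-alphabet bootstrap of DHJ to the alphabet $[q]^t$, the unpacking of the resulting combinatorial line as $\bigl\{v + \sum_{i=1}^t c_i \chi_{S_i} : (c_1,\ldots,c_t) \in \mathbb F_q^t\bigr\}$ with pairwise disjoint nonempty wildcard columns $S_i$ (whose indicators are therefore linearly independent, so the parametrization is an injective affine map and hence a non-degenerate affine homomorphism from $\mathbb F_q^t$), and the fibering step to pass from $tN'$ to general $n$ --- and you correctly flag that DHJ itself is the deep, non-elementary input.
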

Since any $\mathcal B$-free set is $\mathbb F_q^t$-free for some $t$, \Cref{thm:Furstenberg Katznelson} says that affine extremal numbers are always $o(q^n)$. Furstenberg
and Katznelson went on to prove a density version of the Hales-Jewett Theorem \cite{FK91}, from which \Cref{thm:Furstenberg Katznelson} is immediate. Alternative proofs of these results can be found in \cite{RTST06} and \cite{polymath12}, respectively.

The projective version of this problem is even older, beginning with the following result of Bose and Burton \cite{BB66}.
\begin{theorem}[Bose, Burton]\label{thm:Bose Burton}
Let $\mathbb F_q$ be a finite field, and let $t \geq 1$. Let $\mathcal A$ be a subset of $\sqbinom{\mathbb F_q^n}{1}$ for which there is no linear $t$-dimensional subspace $U \subseteq \mathbb F_q^n$ with $\sqbinom{U}{1} \subseteq \mathcal A$. Then
\[|\mathcal A| \leq \frac{q^n - q^{n-t+1}}{q-1},\]
with equality if and only if $\mathcal A = \sqbinom{\mathbb F_q^n}{1} \setminus \sqbinom{W}{1}$ for some linear $(n-t+1)$-dimensional linear subspace $W \subseteq \mathbb F_q^n$.
\end{theorem}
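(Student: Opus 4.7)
The plan is to work with the complement $\mathcal{B} := \sqbinom{\mathbb{F}_q^n}{1} \setminus \mathcal{A}$, so that the hypothesis becomes ``$\mathcal{B}$ meets every $t$-dimensional subspace'' and the desired inequality is $|\mathcal{B}| \geq (q^{n-t+1}-1)/(q-1)$, with equality iff $\mathcal{B} = \sqbinom{W}{1}$ for some $(n-t+1)$-dimensional subspace $W$.

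For the inequality, I would choose $U^* \subseteq \mathbb{F}_q^n$ to be a subspace of maximum dimension $d$ with $\sqbinom{U^*}{1} \subseteq \mathcal{A}$ (valid since $d = 0$ works); the hypothesis on $\mathcal{A}$ forces $d \leq t-1$. Letting $\pi : \mathbb{F}_q^n \to \mathbb{F}_q^n/U^*$ be the quotient, the key claim is that $\pi|_{\mathcal{B}}$ surjects onto $\sqbinom{\mathbb{F}_q^n/U^*}{1}$. Indeed, for each $\bar{\ell}$ in the codomain, the subspace $\pi^{-1}(\bar{\ell})$ has dimension $d+1$ and strictly contains $U^*$, so by maximality of $U^*$ at least one of its 1-subspaces lies in $\mathcal{B}$; since every 1-subspace of $U^*$ is in $\mathcal{A}$, this witness must lie outside $U^*$ and so projects precisely onto $\bar{\ell}$. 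Surjectivity then yields $|\mathcal{B}| \geq (q^{n-d}-1)/(q-1) \geq (q^{n-t+1}-1)/(q-1)$, completing the bound.

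For the equality case, equality in the bound forces $d = t-1$ and $\pi|_\mathcal{B}$ to be a bijection; let $\sigma$ denote its inverse. For any 2-dimensional $\bar{L} \subseteq \mathbb{F}_q^n/U^*$, the $q+1$ elements $\{\sigma(\bar{\ell}) : \bar{\ell} \in \sqbinom{\bar{L}}{1}\}$ all lie in the $(t+1)$-dimensional subspace $\pi^{-1}(\bar{L})$, and by uniqueness of $\sigma$-preimages they meet every hyperplane of $\pi^{-1}(\bar{L})$ (each such hyperplane is a $t$-subspace of $\mathbb{F}_q^n$, hence met by $\mathcal{B}$). I would then invoke the $(n,t) = (t+1,t)$ case of the theorem as a base case --- proved separately by observing that the blocking set cannot span the ambient $\mathbb{F}_q^{t+1}$ and then iterating in the proper subspace it does span --- to conclude this $(q+1)$-set equals $\sqbinom{L^{**}_{\bar{L}}}{1}$ for a 2-subspace $L^{**}_{\bar{L}}$ complementary to $U^*$ in $\pi^{-1}(\bar{L})$. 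This makes the canonical lift $\tilde{\sigma} : \mathbb{F}_q^n/U^* \to \mathbb{F}_q^n$ restrict to a linear isomorphism on every 2-subspace of the domain, hence globally $\mathbb{F}_q$-linear, and $W^* := \mathrm{im}(\tilde{\sigma})$ is the desired $(n-t+1)$-subspace.

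The main obstacle is the equality case: the base case $(n,t) = (t+1,t)$ must be handled by hand, and the 2-subspace data $\{L^{**}_{\bar{L}}\}$ must be glued into a single global subspace $W^*$. The gluing succeeds because $\mathbb{F}_q$-linearity of a function between vector spaces is detectable on any 2-dimensional section of the source, which is exactly the structural information that the base case provides.
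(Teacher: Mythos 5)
The paper does not prove this theorem; it is cited as classical from Bose and Burton's 1966 paper, so there is no in-paper proof to compare against. Your proposed proof is, however, essentially correct and follows a standard route: pass to the complement blocking set $\mathcal{B}$, pick a maximal subspace $U^*$ disjoint from $\mathcal{B}$, and project to the quotient. The inequality argument via surjectivity of $\pi|_{\mathcal{B}}$ is clean, and the gluing argument for the equality case is sound: $L^{**}_{\bar{L}} \cap U^* = 0$ because every $1$-subspace of $L^{**}_{\bar L}$ lies in $\mathcal{B}$ and so outside $U^*$, which makes $\pi|_{L^{**}_{\bar{L}}}$ an isomorphism onto $\bar{L}$; the canonical lift $\tilde\sigma$ then agrees with $(\pi|_{L^{**}_{\bar L}})^{-1}$ on each $1$-subspace of $\bar{L}$, hence on all of $\bar{L}$; and linearity on all $2$-dimensional subspaces gives global linearity.

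The one place that needs shoring up is the base case $(n,t) = (t+1,t)$. Your sketch (``observe that the blocking set cannot span $\mathbb{F}_q^{t+1}$, then iterate in the span'') doesn't quite terminate: after passing to the span $S$ of the blocking set, the blocking set \emph{does} span $S$, so the same observation cannot be iterated there. What you actually need is the stronger statement that a blocking set of $1$-subspaces against hyperplanes in $\mathbb{F}_q^k$ that \emph{spans} a subspace of dimension $\geq 3$ must have more than $q+1$ elements. Here is a short argument for that: pick three linearly independent $\ell_1,\ell_2,\ell_3$ in the blocking set $\mathcal{B}'$, and let $L = \ell_1 + \ell_2$. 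Since $\ell_3 \not\subseteq L$, at most $q$ of the $q+1$ one-subspaces of $L$ lie in $\mathcal{B}'$, so pick $\ell^* \in \sqbinom{L}{1}\setminus\mathcal{B}'$. The hyperplanes of $\spn(\mathcal{B}')$ through $\ell^*$ pairwise meet only in $\ell^*$, there are at least $q+1$ of them, each meets $\mathcal{B}'$, and the particular one containing $L$ already holds both $\ell_1$ and $\ell_2$; so $|\mathcal{B}'| \geq (q+1) + 1 = q+2$. With this in place, equality in the blocking bound forces the span to be $2$-dimensional, and the rest of your argument goes through.
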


\begin{remark}
    It is sometimes convenient to identify a set $\mathcal A \subseteq \sqbinom{\mathbb F_q^n}{1}$ of projective points with a set $A \subseteq \mathbb F_q^n \setminus \{0\}$ of vectors, given by
\[A = \bigcup_{\ell \in \mathcal A} \ell \setminus \{0\}.\]
We call a set $A$ of this form \emph{projectively determined}. Similarly, we can identify any $k$-coloring of $\sqbinom{\mathbb F_q^n}{1}$ with a 
\emph{projectively determined} $k$-coloring of $\mathbb F_q^n \setminus \{0\}$, meaning that each color class is a projectively determined set of vectors. Moving forward, we will work from the perspective of projectively determined sets and colorings whenever we discuss results of a projective nature, such as Theorems \ref{thm:diagonal Fq k colors}, \ref{thm:off-diagonal Fq}, and \ref{thm:Bose Burton}.
\end{remark}

The problem of determining projective extremal numbers asymptotically for general projective configurations over $\mathbb F_q$ was almost entirely solved by Geelen and Nelson \cite{GN15}, who proved a theorem analogous to the Erd\H{o}s-Stone-Simonivits Theorem for graphs. Their theorem gives precise asymptotics for the extremal number of any projective configuration, except for those which exclude a linear hyperplane, which are usually called ``affine''. Up to a constant factor, the projective extremal numbers of these ``affine'' projective configurations reduce to affine extremal numbers of the type discussed in this paper. Therefore, what can be said of their projective extremal numbers is that they are degenerate by \Cref{thm:Furstenberg Katznelson}, which is far from an asymptotic determination, similar to the case of extremal numbers of bipartite graphs.

It is unknown in general (see \cite{G21}, Open Problem 32) whether the $o(q^n)$ bound in \Cref{thm:Furstenberg Katznelson} can be taken to be of the form $O\left((q^{1-\delta})^n\right)$ for some $\delta = \delta(q,t) > 0$. However, for $q = 2$ and $q=3$, we have the following respective results of Bonin and Qin \cite{BQ00}, and of Fox and Pham \cite{FP17}.
\begin{theorem}[Bonin, Qin] \label{thm:Bonin Qin}
    There exists an absolute constant $c$ such that for every $t \geq 1$, every subset of $\mathbb F_2^n$ of size at least $(2^{1 - c2^{-t}})^n$ contains an affine $t$-space. 
\end{theorem}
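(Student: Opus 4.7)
The plan is to prove the bound by induction on $t$, using a density-squaring step that passes to a linear hyperplane at each stage. The base case $t = 1$ is immediate, since any pair of distinct points in $\mathbb{F}_2^n$ forms an affine $1$-space, and the hypothesis yields $|A| \geq 2$ for any reasonable $c$.

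For the inductive step, suppose the statement holds for $t-1$ and let $A \subseteq \mathbb{F}_2^n$ have density $\delta := |A|/2^n \geq 2^{-c 2^{-t} n}$. First I would pigeonhole on the identity
\[\sum_{v \in \mathbb{F}_2^n \setminus \{0\}} |A \cap (A+v)| = |A|^2 - |A|\]
to produce a nonzero $v$ with $|A \cap (A+v)| \geq (|A|^2 - |A|)/(2^n - 1)$. Then I would fix a linear hyperplane $H \subseteq \mathbb{F}_2^n$ with $v \notin H$, and decompose $A = A_0 \sqcup A_1$ where $A_0 = A \cap H$ and $A_1 = A \cap (H+v) = A_1' + v$ for some $A_1' \subseteq H$. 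Splitting $A \cap (A+v)$ according to the coset of its elements and noting the bijection $a \mapsto a+v$ between the two pieces gives $|A \cap (A+v)| = 2\,|A_0 \cap A_1'|$, so the set $B := A_0 \cap A_1'$ has density at least $\delta^2$ (up to lower-order corrections) inside $H \cong \mathbb{F}_2^{n-1}$.

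By construction every element $b \in B$ satisfies both $b \in A$ and $b+v \in A$. Provided $\delta^2$ exceeds the threshold $2^{-c 2^{-(t-1)}(n-1)}$, the inductive hypothesis applied to $B$ in $H \cong \mathbb{F}_2^{n-1}$ yields an affine $(t-1)$-space $S \subseteq B$. Because $v \notin H$ does not lie in the direction space $S - S \subseteq H$ of $S$, the translates $S$ and $S + v$ are disjoint, and their union $S \cup (S+v)$ is an affine $t$-space of $2^t$ elements entirely contained in $A$, completing the induction.

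The main technical obstacle is calibrating the absolute constant $c$ so that the recursion closes cleanly. Iterating the step ``density squares, dimension drops by one'' from the base threshold $\delta \geq 2^{1-n}$ yields, after $t-1$ steps, a sufficient threshold of the form $\delta \geq 2^{-(n-t)/2^{t-1}}$; this matches the target shape $(2^{1 - c 2^{-t}})^n$ in the regime $n \gg t$, with an appropriate absolute choice of $c$. The small multiplicative losses from the pigeonhole bound and the handling of the boundary regime $n$ close to $t$, where the theorem is either vacuous or trivial, can be absorbed into a marginally larger value of $c$.
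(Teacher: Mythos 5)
Your argument is essentially the paper's own proof (Section 4), unrolled for $q=2$: pigeonhole on the identity $\sum_{v\neq 0}|A\cap(A+v)|=|A|^2-|A|$ to find a popular difference $v$, pass to a complementary linear hyperplane, observe that the set of $a$ with both $a,a+v\in A$ has density roughly $\delta^2$ there, and induct — this is exactly Lemma~\ref{lem:supersaturation implies product extremal} specialized to $B=\mathbb{F}_2^1$, $\mathcal{F}=\{\mathbb{F}_2^{t-1}\}$, together with the trivial fact that $\mathbb{F}_2^1$ is $2$-weakly Sidorenko. The paper packages the density-squaring step in the weakly-Sidorenko/affine-homomorphism language so the same template also proves \Cref{thm:Fox Pham} over $\mathbb{F}_3$, but for $q=2$ the content is identical to yours.
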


\begin{theorem}[Fox, Pham]\label{thm:Fox Pham}
There exist absolute constants $c$ and $C_0$, with $C_0 \approx 13.901$ such that for every $t \geq 1$, every subset of $\mathbb F_3^n$ of size at least $\left(3^{1 - cC_0^{-t}}\right)^n$ contains an affine $t$-space.
\end{theorem}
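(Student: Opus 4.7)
The plan is to prove the theorem by induction on $t$, taking the Ellenberg--Gijswijt cap set bound as the base case $t = 1$: any $A \subseteq \mathbb F_3^n$ with no affine line satisfies $|A| \leq (2.7551\ldots)^n = (3^{1 - c_0})^n$ with $c_0 \approx 0.079$. This matches the claimed bound at $t = 1$ once the constants are constrained so that $cC_0^{-1} \leq c_0$.

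For the inductive step, assume the statement for $t-1$ and let $A \subseteq \mathbb F_3^n$ with $|A| \geq (3^{1 - cC_0^{-t}})^n$. I would split $\mathbb F_3^n = \mathbb F_3^m \oplus \mathbb F_3^{n-m}$ for a carefully chosen $m = \beta n$ and pass to the fibers $A_y := \{x \in \mathbb F_3^m : (x, y) \in A\}$ for $y \in \mathbb F_3^{n-m}$. Since the fiber densities average to $|A|/3^n \geq 3^{-cC_0^{-t} n}$, a Markov-type argument yields a set $Y \subseteq \mathbb F_3^{n-m}$ of fibers satisfying the inductive threshold $|A_y|/3^m \geq 3^{-cC_0^{-(t-1)} m}$, provided $\beta$ is chosen so the two exponents are compatible (forcing roughly $\beta > 1/C_0$). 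By the inductive hypothesis, each such $A_y$ contains an affine $(t-1)$-space, and a removal-type supersaturation argument upgrades this to many copies per good fiber.

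Next, I would double-count the pairs $(U, y)$ with $y \in Y$ and $U \subseteq A_y$ an affine $(t-1)$-space of $\mathbb F_3^m$, using that $\mathbb F_3^m$ has $3^{(t-1)m + O_t(1)}$ such subspaces. By pigeonhole, some particular $U^\star \subseteq \mathbb F_3^m$ has its extension set $Y_{U^\star} := \{y \in Y : U^\star \subseteq A_y\}$ exponentially large in $\mathbb F_3^{n-m}$. Once $|Y_{U^\star}| > (2.7551)^{n-m}$, Ellenberg--Gijswijt produces an affine line $\{y_1, y_2, y_3\} \subseteq Y_{U^\star}$, and then $\bigcup_{i=1}^3 (U^\star \times \{y_i\})$ is an affine $t$-space of $\mathbb F_3^n$ contained in $A$, contradicting our assumption.

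The main obstacle will be the parameter optimization that closes the recursion at the sharp $C_0 \approx 13.901$ rather than a larger base. Three quantities must be balanced: the slicing ratio $\beta$, the pigeonhole loss of order $3^{(t-1)m}$ coming from the number of candidate affine $(t-1)$-spaces, and the Ellenberg--Gijswijt threshold applied in $\mathbb F_3^{n-m}$. The naive choice $\beta = \tfrac{1}{2}$ would only propagate the cap set constant by $(2.7551/3)^{1/2}$ per level, and more seriously the linear-in-$t$ pigeonhole loss must be counteracted either by a $t$-dependent choice of $\beta$ or by extracting stronger quantitative supersaturation from the inductive hypothesis (so that the good fibers contribute far more than one $(t-1)$-space each). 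The explicit constant $C_0 \approx 13.901$ should emerge from a calculus optimization equating the marginal costs at each level of the recursion, and this optimization is the technical core of the argument.
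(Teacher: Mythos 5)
Your outline has the right general shape---induction on $t$ with a slicing argument---but there are two linked problems, one a genuine gap and one a misconception about where $C_0$ comes from, and together they mean the argument as written will not close.

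\textbf{The pigeonhole loss is fatal.} You slice $\mathbb F_3^n = \mathbb F_3^m \oplus \mathbb F_3^{n-m}$ with $m = \beta n$, find one affine $(t-1)$-space in each good fiber $A_y$, and pigeonhole over the $\approx 3^{(t-1)m}$ candidate $(t-1)$-spaces $U^\star \subseteq \mathbb F_3^m$. To make the inductive threshold $3^{-cC_0^{-(t-1)}m}$ compatible with the ambient density $3^{-cC_0^{-t}n}$ you are forced to take $\beta \approx 1/C_0$, independent of $t$. Then $|Y_{U^\star}| \gtrsim |Y| \cdot 3^{-(t-1)m}$, and for this to beat the Ellenberg--Gijswijt threshold $(3^{1-c_0})^{n-m}$ in $\mathbb F_3^{n-m}$ you would need roughly $c_0(n-m) > (t-1)m$, i.e.\ $c_0(C_0-1) > t-1$, which fails for all large $t$. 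You flag this problem yourself, but neither of your proposed fixes is clearly workable: a $t$-dependent $\beta$ destroys the clean recursion, and the ``removal-type supersaturation of the inductive hypothesis'' would need to produce on the order of $3^{(t-1)m}$ distinct $(t-1)$-spaces per good fiber, which is essentially \emph{all} of them---a much stronger statement than mere supersaturation.

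\textbf{The constant $C_0$ is not a recursion-optimization constant.} You predict that $C_0 \approx 13.901$ ``should emerge from a calculus optimization equating the marginal costs at each level of the recursion.'' It does not. In the paper, $C_0$ is imported once, as a black box, from the Fox--Lov\'asz improvement of the arithmetic triangle removal lemma, which gives the \emph{supersaturation} statement recorded in \Cref{lem:affine lines Sidorenko}(c): every $A \subseteq \mathbb F_3^n$ of density $\alpha$ contains at least $\alpha^{C_0} N^2$ three-term arithmetic progressions (affine-homomorphic images of $\mathbb F_3^1$). This is strictly stronger than Ellenberg--Gijswijt, which you use as your only cap-set input and which only gives the extremal threshold, not a count. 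The paper's induction then slices in the opposite direction from yours: from this supply of $\gtrsim \alpha^{C_0} N^2$ affine lines in $A$, pigeonhole over the line \emph{direction} $u \in \mathbb F_q^n$ and then over the $q$ translates of a complementary hyperplane (\Cref{lem:supersaturation implies product extremal}, with $B = \mathbb F_q^1$ and $\mathcal F = \{\mathbb F_q^{t-1}\}$). The total pigeonhole loss per level is a factor of $qN$, not $3^{(t-1)m}$, and the resulting slice inherits $\mathbb F_q^{t-1}$-freeness from the $\mathbb F_q^t$-freeness of $A$, so the recursion closes cleanly with base $\sigma_q$ per iteration. This is why the answer comes out as $C_0^{-t}$ in the exponent, with $C_0$ inherited from the single supersaturation input rather than from balancing parameters across levels.
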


The proof of Theorem \ref{thm:Bonin Qin} is entirely self-contained and is no more than a page. Theorem \ref{thm:Fox Pham}, on the other hand, is the culmination of several breakthroughs related to the Cap Set Problem, starting with the advances in polynomial methods from Croot, Lev, and Pach \cite{CLP17} and the subsequent proof of the Cap Set Theorem by Ellenberg and Gijswijt \cite{EG17}, which says that $\exaff(n,\mathbb F_3^1) \leq (3^{1 - \delta})^n$ for some $\delta > 0$. Fox and Lov\'asz \cite{FL17} then used this result to give improved bounds on Green's Arithmetic Triangle Removal Lemma \cite{G05}. Fox and Pham observed that this improvement implies a supersaturation version of the Cap Set Theorem, from which they derived Theorem \ref{thm:Fox Pham}, which is a multidimensional extension of the Cap Set Theorem. It is unknown whether the constant $C_0$ given in the theorem is tight, as probabilistic lower bounds for $\exaff(n,\mathbb F_3^t)$ are on the order of $\left(3^{1 - 3^{-(1 + o(1))t}}\right)^n$ \cite{FP17}.

The argument of Fox and Pham over $\mathbb F_3$ is essentially the same as Bonin and Qin's proof over $\mathbb F_2$. The key ingredient to both is supersaturation of affine lines, which is trivial over $\mathbb F_2$ and highly non-trivial over $\mathbb F_3$. We include this argument here in a more general form in Section \ref{sec:recovering bq and fp}. We also give a new proof of these results which additionally asserts a strong form of supersaturation, giving a quantitative improvement to a supersaturation result of Gijswijt \cite{G21} for certain affine configurations.

Our supersaturation results arise naturally from counting affine homomorphisms which are maps preserving affine configurations. (See \Cref{sec:preliminaries} for details of the notation and terminology used here). We use $\homaff(B,A)$ to denote the number of affine homomorphisms $B \to A$. We say that an affine configuration $B \subseteq \mathbb F_q^m$ is \emph{$C$-weakly Sidorenko} if, for any $A \subseteq \mathbb F_q^n$ of density $\alpha$,
\[\homaff(B, A) \geq \alpha^C N^{\rankaff(B)},\]
where $N := q^n$. By taking $A$ to be a $p$-random subset of $\mathbb F_q^n$ for some fixed $p \in (0,1)$, we see that $B$ cannot be $C$-weakly Sidorenko for $C < |B|$. In the case that $B$ is $C$-weakly Sidorenko with $C = |B|$, we simply say that $B$ is \emph{Sidorenko}; that is, $B$ is Sidorenko if
\[\homaff(B, A) \geq \alpha^{|B|}N^{\rankaff(B)}\]
for any $A \subseteq \mathbb F_q^n$ of density $\alpha$, with $N:= q^n$.

The notion of Sidorenko affine configurations originates from Saad and Wolf \cite{SW17}, who gave an equivalent definition in the language of linear forms. They proved that an affine configuration $\{x_1, \ldots, x_k\}$ with a single relation $\sum_{i = 1}^k \lambda_i x_i = 0$ is Sidorenko whenever the coefficients $\lambda_i$ can be partitioned into zero-sum pairs. They conjectured that these are the only affine configurations with a single relation which are Sidorenko. Fox, Pham, and Zhao showed that the conjecture is true in spirit, but in reality the correct statement is that $\{x_1, \ldots, x_k\}$ with a single relation $\sum_{i = 1}^k \lambda_i x_i = 0$ is Sidorenko if and only if the \emph{nonzero} coefficients $\lambda_i$ can be partitioned into zero-sum pairs \cite{FPZ21}. In particular, these results tell us that for each $k \geq 2$, the \emph{circuit} of length $2k$ over $\mathbb F_2$, which we define to be the affine configuration $C_{2k} := \{0, e_1, \ldots, e_{2k-2}, \sum_{i = 1}^{2k-2} e_i\} \subseteq \mathbb F_2^{2k-2}$, where $e_1, \ldots, e_{2k-2}$ are the standard basis vectors in $\mathbb F_2^{2k-2}$, is Sidorenko. We also have trivially that $\mathbb F_2^1$ is Sidorenko, as is any affinely independent affine configuration over any finite field. It is unknown whether there exist affine configurations over $\mathbb F_2$ which are not Sidorenko. Over $\mathbb F_3$, Fox and Pham \cite{FP17} pointed out that a result of Fox and Lov\'asz \cite{FL17} implies that for $C_0 \approx 13.901$, for any affine configuration $A \subseteq \mathbb F_3^n$ of size $\alpha N$, where $N := 3^n$, there are at least $\alpha^{C_0}N^2$ triples in $A^3$ of the form $(x,x+d,x+2d)$. In other words, $\mathbb F_3^1$ is $C_0$-weakly Sidorenko.
It is unclear whether or not $\mathbb F_3^1$ is $C$-weakly Sidorenko for some $C < C_0$. What we can say is that known lower bounds on $\exaff(n,\mathbb F_3^1)$ by Tyrell \cite{T22} imply that $\mathbb F_3^1$ is not $C$-weakly Sidorenko for any $C < 4.63$. 
We summarize these for future reference in the following lemma.
\begin{lemma}\label{lem:affine lines Sidorenko}
Let $\sigma_2 := 2$ and $\sigma_3 := C_0 \approx 13.901$ as in \Cref{thm:Fox Pham}.
\begin{enumerate}[label=(\alph*)]
    \item $\mathbb F_2^1$ is $\sigma_2$-weakly Sidorenko and hence Sidorenko.
    \item For each $k \geq 2$, $C_{2k} \subseteq \mathbb F_2^{2k-2}$ is Sidorenko.
    \item $\mathbb F_3^1$ is $\sigma_3$-weakly Sidorenko.
\end{enumerate}
\end{lemma}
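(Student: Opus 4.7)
The plan is to handle the three parts separately, unpacking the definitions in each case and then either doing a direct count or invoking a result cited in the discussion just before the lemma.

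For part (a), I would observe that $\mathbb F_2^1 = \{0,1\}$ is affinely independent, so any function $\phi : \mathbb F_2^1 \to A$ extends uniquely to an affine map $\spn(\mathbb F_2^1) = \mathbb F_2 \to \mathbb F_2^n$. Consequently, affine homomorphisms $\mathbb F_2^1 \to A$ are in bijection with ordered pairs $(\phi(0),\phi(1)) \in A \times A$, and hence
\[\homaff(\mathbb F_2^1, A) = |A|^2 = \alpha^2 N^2.\]
Since $|\mathbb F_2^1| = \rankaff(\mathbb F_2^1) = 2 = \sigma_2$, this is exactly the bound $\alpha^{\sigma_2} N^{\rankaff(\mathbb F_2^1)}$, so $\mathbb F_2^1$ is $\sigma_2$-weakly Sidorenko and in particular Sidorenko.

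For part (b), I would appeal to the result of Fox, Pham, and Zhao \cite{FPZ21} quoted in the introduction: a configuration with a single affine relation $\sum \lambda_i x_i = 0$ is Sidorenko if and only if its \emph{nonzero} coefficients can be partitioned into zero-sum pairs. The affine span of $C_{2k} = \{0, e_1,\ldots,e_{2k-2}, \sum_i e_i\}$ is $\mathbb F_2^{2k-2}$, and its unique (up to scaling) nontrivial affine relation, read off from $0 + e_1 + \cdots + e_{2k-2} + \sum_{i=1}^{2k-2} e_i = 0$ (with coefficients summing to $2k \equiv 0 \pmod 2$), has all $2k$ coefficients equal to $1$. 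Over $\mathbb F_2$, the pair $\{1,1\}$ is a zero-sum pair, so the $2k$ coefficients split into $k$ such pairs, and Fox--Pham--Zhao yields that $C_{2k}$ is Sidorenko.

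For part (c), I would note that affine homomorphisms $\mathbb F_3^1 \to A$ are precisely the three-term arithmetic progressions $(x, x+d, x+2d)$ lying in $A$ (with $d$ allowed to be $0$), so the count $\homaff(\mathbb F_3^1, A)$ is exactly the number of such triples in $A^3$. The observation of Fox and Pham \cite{FP17}, derived from the improved triangle removal lemma of Fox and Lov\'asz \cite{FL17}, states that this count is at least $\alpha^{C_0} N^2$ for any $A \subseteq \mathbb F_3^n$ of density $\alpha$. Since $\rankaff(\mathbb F_3^1) = 2$ and $\sigma_3 = C_0$, this is the desired inequality.

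There is really no substantive obstacle here: parts (b) and (c) are citations, and part (a) is a one-line count. The only thing to be careful about is aligning the definition of affine homomorphism used in Section \ref{sec:preliminaries} with the "triples in arithmetic progression" and "pairs of points" interpretations used above, so that the counts on the right-hand sides genuinely match $\homaff(\mathbb F_q^1, A)$ and the exponent $\rankaff(\mathbb F_q^1) = 2$ comes out correctly.
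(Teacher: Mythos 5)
Your proposal is correct and matches the paper's intent: the lemma is stated as a summary of the discussion immediately preceding it, and you reproduce all three justifications faithfully. Part (a) is the trivial count of pairs for an affinely independent configuration; part (b) cites Fox--Pham--Zhao \cite{FPZ21} with the correct identification of the unique affine relation of $C_{2k}$ and its partition into $k$ zero-sum pairs $\{1,1\}$ over $\mathbb F_2$; and part (c) is the Fox--Pham \cite{FP17} / Fox--Lov\'asz \cite{FL17} bound on three-term progressions, correctly rephrased as a count of affine homomorphisms $\mathbb F_3^1 \to A$.
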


We make the following simple observation which allows us to construct new weakly Sidorenko affine configurations from old ones, which we prove in \Cref{sec:homomorphic supersaturation}.
\begin{theorem}\label{thm:Sidorenko product}
    Suppose that $B_1 \subseteq \mathbb F_q^{m_1}$ is $C_1$-weakly Sidorenko and $B_2 \subseteq \mathbb F_q^{m_2}$ is $C_2$-weakly Sidorenko. Then $B_1 \times B_2$ is $C_1C_2$-weakly Sidorenko. In particular, if $B_1$ and $B_2$ are both Sidorenko, then so is $B_1 \times B_2$.
\end{theorem}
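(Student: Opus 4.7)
The approach is to iteratively apply the weakly Sidorenko hypothesis to each factor, using a standard product-style decomposition of affine maps together with a key bijection. Write $V := \FF_q^n$, $N := q^n$, $\alpha := |A|/N$, and $r_i := \rankaff(B_i)$, and fix base points $b_1^{(0)} \in B_1$ and $b_2^{(0)} \in B_2$. The first step is to show that every affine homomorphism $\phi : B_1 \times B_2 \to V$ decomposes uniquely as $\phi(b_1,b_2) = g(b_1) + h(b_2)$, where $g : B_1 \to V$ is an arbitrary affine homomorphism and $h : B_2 \to V$ is an affine homomorphism normalized by $h(b_2^{(0)})=0$. (Given $\phi$, set $g(b_1) := \phi(b_1, b_2^{(0)})$ and $h(b_2) := \phi(b_1^{(0)}, b_2) - \phi(b_1^{(0)}, b_2^{(0)})$; the identity $\phi = g + h$ is the familiar ``product rule'' for affine maps of a product.) Under this parametrization, $\phi$ sends $B_1 \times B_2$ into $A$ if and only if $g(b_1) \in A_h := \bigcap_{b_2 \in B_2}(A - h(b_2))$ for all $b_1 \in B_1$, i.e., $g$ is an affine homomorphism $B_1 \to A_h$. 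Swapping the order of summation yields
\[ \homaff(B_1 \times B_2, A) \;=\; \sum_h \homaff(B_1, A_h), \]
where $h$ ranges over the $N^{r_2 - 1}$ normalized affine homomorphisms.

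Next, apply the $C_1$-weakly Sidorenko hypothesis for $B_1$ to each $A_h$, giving $\homaff(B_1, A_h) \geq (|A_h|/N)^{C_1} N^{r_1}$. Since $C_1 \geq |B_1| \geq 1$, Jensen's inequality for the convex function $x \mapsto x^{C_1}$ yields
\[ \sum_h |A_h|^{C_1} \;\geq\; N^{(r_2 - 1)(1 - C_1)} \Bigl(\sum_h |A_h|\Bigr)^{C_1}. \]

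The main algebraic step is the identity $\sum_h |A_h| = \homaff(B_2, A)$, which glues the two factors together. To prove it, expand $\sum_h |A_h|$ as a count of pairs $(h, x)$ with $h(b_2^{(0)}) = 0$ and $x + h(b_2) \in A$ for every $b_2 \in B_2$, and observe that $(h, x) \mapsto \tilde h := x + h$ is a bijection from such pairs to affine homomorphisms $\tilde h : B_2 \to A$, with inverse $\tilde h \mapsto \bigl(\tilde h - \tilde h(b_2^{(0)}),\, \tilde h(b_2^{(0)})\bigr)$. The $C_2$-weakly Sidorenko hypothesis for $B_2$ now gives $\sum_h |A_h| = \homaff(B_2, A) \geq \alpha^{C_2} N^{r_2}$.

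Combining everything and using the equality $\rankaff(B_1 \times B_2) = r_1 + r_2 - 1$, the resulting exponent of $N$ simplifies via
\[ r_1 - C_1 + (r_2 - 1)(1 - C_1) + r_2 C_1 \;=\; r_1 + r_2 - 1, \]
so $\homaff(B_1 \times B_2, A) \geq \alpha^{C_1 C_2} N^{r_1 + r_2 - 1}$, establishing that $B_1 \times B_2$ is $C_1 C_2$-weakly Sidorenko. The ``in particular'' case follows since $|B_1 \times B_2| = |B_1|\,|B_2|$. The main obstacle is spotting the clean product decomposition $\phi = g + h$ and the bijection behind $\sum_h |A_h| = \homaff(B_2, A)$; once both are in place, the remaining steps are routine applications of Sidorenko-type inequalities and Jensen, with the exponents lining up exactly.
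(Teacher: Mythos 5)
Your proof is correct and follows essentially the same route as the paper's. The paper also reduces $\homaff(B_1 \times B_2, A)$ to $\sum \homaff(B_1, A_{\mathbf v})$, applies the $C_1$-weakly Sidorenko hypothesis to each $A_{\mathbf v}$, invokes Jensen for $x \mapsto x^{C_1}$, and then identifies $\sum_{\mathbf v}|A_{\mathbf v}|$ with $\homaff(B_2, A)$ before applying the $C_2$-weakly Sidorenko hypothesis; the only difference is cosmetic, in that the paper parametrizes affine homomorphisms by pairs $(z, \mathbf u)$ (the image of a base point and the images of the basis differences, with a corresponding set $\spn_{B}(\mathbf u)$), whereas you package the same data as a normalized affine homomorphism $h$ via the clean decomposition $\phi = g + h$, which arguably makes the double-counting identity $\sum_h |A_h| = \homaff(B_2, A)$ a bit more transparent.
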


Recently, some attention has been given to classifying Sidorenko affine configurations, usually motivated by Ramsey multiplicity problems in additive settings. In addition to the work of \cite{SW17} and \cite{FPZ21} already mentioned, Kam\v{c}ev, Liebenau, and Morrison proved that affine configurations admitting a certain type of tree-like structure are Sidorenko \cite{KLM23}. They additionally gave a necessary condition for an affine configuration to be Sidorenko, which always holds trivially over $\mathbb F_2$. Altman defined a local weakening of the Sidorenko property and described a particular family of affine configurations (none over $\mathbb F_2$) which are not Sidorenko \cite{A22a}. See \cite{A22b}, \cite{CCS07}, \cite{KLM22}, \cite{KLP22}, \cite{V21}, and \cite{V23} for further work in the area.

We now give a vector space analogue of Sidorenko's Conjecture \cite{S91} on graph homomorphisms, which says that for any bipartite graph $H$ on $v$ vertices with $e$ edges, and any graph $G$ with $N$ vertices and $\alpha N^2/2$ edges, the number of homomorphisms from $H$ to $G$ is at least $\alpha^e N^v$.
\begin{conjecture}
    Every affine configuration over $\mathbb F_2$ is Sidorenko.
\end{conjecture}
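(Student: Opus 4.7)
The plan is to build up arbitrary affine configurations over $\FF_2$ from known Sidorenko building blocks. Iterating \Cref{thm:Sidorenko product} on $\FF_2^1$ (Sidorenko by \Cref{lem:affine lines Sidorenko}(a)) shows that every full affine cube $\FF_2^t$ is Sidorenko; combined with the circuits $C_{2k}$ from \Cref{lem:affine lines Sidorenko}(b), the tree-like configurations of \cite{KLM23}, and the trivial fact that affinely independent configurations are Sidorenko, this already provides a reasonably rich family. Without loss of generality, I would reduce to the case where $B \subseteq \FF_2^m$ has full affine rank by embedding $B$ into its affine span.

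Next, I would attempt a decomposition of $B$ along its affine matroid into smaller pieces which are already known to be Sidorenko, extending the tree-like framework of \cite{KLM23}. The critical step would then be a \emph{gluing lemma}: if $B_1, B_2 \subseteq \FF_2^m$ are both Sidorenko and $B_1 \cap B_2$ is affinely independent, then $B_1 \cup B_2$ is Sidorenko as well. Such a lemma should proceed by a Cauchy--Schwarz or entropy argument which conditions on the image of the shared affine span $\mathrm{aff}(B_1 \cap B_2)$ and applies the individual Sidorenko bounds to each ``branch''. Iterating this gluing lemma along a suitable decomposition of $B$ would give the conjecture.

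The main obstacle is precisely this gluing step. The analogous conjecture for graphs has resisted decades of effort, and the known positive cases there all fit within some tree-like framework. Over $\FF_2$ the extra linear structure offers grounds for optimism: every affine dependency corresponds to an even subset summing to zero, and the affine matroid of $B$ is highly structured. A natural avenue is to adapt Shearer-type entropy inequalities to affine homomorphisms, exploiting the product structure of $\FF_2^n$. Even so, one should expect a complete resolution of the conjecture to require ideas beyond iterated products and tree-like decompositions, especially for configurations whose matroids admit no natural series-parallel structure.
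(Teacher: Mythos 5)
The statement you are addressing is a \emph{conjecture}: the paper offers no proof, and indeed immediately before stating it remarks that ``It is unknown whether there exist affine configurations over $\mathbb F_2$ which are not Sidorenko.'' There is therefore no proof in the paper to compare against, and your submission does not close that gap.

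Your write-up is an honest research program rather than a proof, and you say as much. The concrete gap is the ``gluing lemma'': you assert that if $B_1, B_2 \subseteq \mathbb F_2^m$ are Sidorenko and $B_1 \cap B_2$ is affinely independent, then $B_1 \cup B_2$ is Sidorenko, and you sketch only the \emph{shape} of an argument (Cauchy--Schwarz or entropy, condition on the image of $\mathrm{aff}(B_1 \cap B_2)$). No such lemma is proven here or in the cited literature in this generality, and it would already be a substantial new theorem. The difficulty is real: conditioning on the image of the shared affine span and applying the Sidorenko bound for each branch gives a bound in terms of a conditional density, and recombining these conditional bounds into the unconditional $\alpha^{|B_1 \cup B_2|} N^{\rankaff(B_1 \cup B_2)}$ requires a convexity step that does not follow from a single application of Jensen or Cauchy--Schwarz when the shared part has affine rank $\geq 2$ (the rank-one case is essentially the product structure already exploited in \Cref{thm:Sidorenko product}). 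The analogous graph-theoretic gluing results (vertex gluing, reflection/tree decompositions) are exactly what powers the known cases of Sidorenko's conjecture, and they do \emph{not} extend to gluing along arbitrary independent sets.

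Even granting the gluing lemma, a second, equally serious gap remains: you do not show that every affine configuration over $\mathbb F_2$ admits a decomposition into Sidorenko pieces glued along affinely independent intersections. Iterating products of $\mathbb F_2^1$ and circuits $C_{2k}$ only produces configurations whose affine matroids are direct sums and series-parallel compositions of these pieces; a generic affine configuration over $\mathbb F_2$ (e.g.\ one whose affine matroid contains a dense, highly connected minor such as a projective geometry) does not obviously decompose this way. So both the base cases and the inductive step of your proposed induction are missing. As a roadmap this is a reasonable discussion of the open problem, but it should not be read as a proof or even as a reduction of the conjecture to a single identifiable lemma.
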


It is not hard to check that $\mathbb F_q^1$ is not Sidorenko for any $q > 2$, so the conjecture only makes sense over $\mathbb F_2$.

\section{Preliminaries}\label{sec:preliminaries}

The objects we consider are subsets of finite-dimensional vector spaces over a fixed finite field $\mathbb F_q$. Such a subset $A \subseteq \mathbb F_q^n$ has linear structure as well as affine structure, which we define precisely below. Depending on which type of structure we are considering, we call $A$ a \emph{linear configuration} or an \emph{affine configuration}.

A \emph{linear relation} on $A = \{x_1, \ldots, x_k\} \subseteq \mathbb F_q^n$ is an equation of the form $\sum_{i = 1}^k \lambda_i x_i = 0,$ where $\lambda_1, \ldots, \lambda_k \in \mathbb F_q$.
If, in addition, we have $\sum_{i =1}^k \lambda_i = 0$, then the relation is called \emph{affine}. The relation is \emph{trivial} if each $\lambda_i = 0$. If $A$ has no nontrivial affine relations, then $A$ is called \emph{affinely independent}. A maximal affinely independent subset of $A$ is called an \emph{affine basis} for $A$. The size of any affine basis for $A$ is an invariant of $A$, called its \emph{affine rank}, which we denote by $\rankaff(A)$.

Given two configurations $A \subseteq \mathbb F_q^n$ and $B \subseteq \mathbb F_q^m$, a function $\varphi : B \to A$ is an \emph{affine homomorphism} if $\varphi$ preserves affine relations; that is, for any $\lambda_1, \ldots, \lambda_k \in \mathbb F_q$ and $x_1, \ldots, x_k \in B$ with $\sum_{i = 1}^k \lambda_i = 0$ and $\sum_{i =1}^k \lambda_i x_i = 0$, we have $\sum_{i = 1}^k \lambda_i \varphi\left(x_i\right) = 0.$ Equivalently, $\varphi$ is an affine homomorphism if $\varphi$ extends to an affine  map $\mathbb F_q^m \to \mathbb F_q^n$. We say that a homomorphism $\varphi : B \to A$ is an \emph{isomorphism} if $\varphi$ is bijective and $\varphi^{-1}$ is a homomorphism. If $B=A$, we call $\varphi$ an \emph{automorphism} of $B$, the set of which we denote by $\autaff(B)$. The affine isomorphism class of $A \subseteq \mathbb F_q^n$ is called its \emph{affine structure}. A homomorphism $\varphi$ which is an isomorphism onto its image is called \emph{non-degenerate}, which means that $\varphi$ is injective and preserves relations as well as non-relations.

Each of the affine notions above has a naturally-defined linear counterpart by considering linear relations instead of affine relations. In particular, the \emph{linear structure} of $A \subseteq \mathbb F_q^n$ is its linear isomorphism class, which characterizes the linear relations and non-relations among elements of $A$. We denote the linear rank of $A$ by $\rank(A)$.

We say that $A \subseteq \mathbb F_q^n$ contains an \emph{affine copy} of $B \subseteq \mathbb F_q^m$ if there is a non-degenerate affine homomorphism $B \to A$. If $\mathcal B = \{B_i\}_{i \in I}$ is a family of affine configurations $B_i \subseteq \mathbb F_q^{m_i}$, we say that $A$ is $\mathcal B$-free if $A$ contains no affine copy of any $B_i$. The largest size $\exaff(n,\mathcal B)$ of an affine $\mathcal B$-free subset of $\mathbb F_q^n$ is called the $n$-th \emph{affine extremal number} of $\mathcal B$. If $\mathcal B = \{B\}$, we write $\exaff(n,\{B\}) = \exaff(n,B)$.

For $A \subseteq \mathbb F_q^n$ and $B \subseteq \mathbb F_q^m$, define the \emph{product} of $A$ and $B$ to be the set
\[A \times B := \{(x,y) \in \mathbb F_q^{n+m} : x \in A, y \in B\}.\]
The affine and linear structures of $A \times B$ are determined by the respective affine and linear structures of $A$ and $B$.

\begin{figure}[ht]
  \centering
  \tikzstyle{every path}=[thick]
  \tikzstyle{label}=[draw=none, fill=none]
  \begin{tikzpicture}
    \foreach \a in {0,1}
        \foreach \b in {0,1}
            \foreach \c in {0,1}
                \node[circle, draw, fill=black, inner sep=0pt, minimum width=1pt](F23.\a\b\c) at (3*\a + 1.2*\b, 1.2*\c +6.2) {};
    \foreach \a in {0,1}
        \foreach \b in {0,1}
            \foreach \c in {0,1}
                \node (l.F23.\a\b\c) at (3*\a + 1.2*\b, 1.2*\c +6.2-0.4) {\a\b\c};
    \draw (-1,-1+6.2) -- (5.2,-1+6.2) -- (5.2,1.8+6.2) -- (-1,1.8+6.2) -- (-1,-1+6.2);
    \foreach \a/\b/\c in {1/0/0, 1/1/0, 0/1/0, 0/0/1}
        \node[circle, draw, fill=black, inner sep=0pt, minimum width=4pt](A.\a\b\c) at (3*\a + 1.2*\b, 1.2*\c+6.2) {};
    \node[scale=1.5] (A) at (2.1,6.2+ 0.4) {$A$};

    \foreach \d in {0,1}
        \foreach \e in {0,1}
            \node[circle, draw, fill=black, inner sep=0pt, minimum width=1pt](F22.\d\e) at (1.2*\d + 8, 1.2*\e +6.2) {};
    \foreach \d in {0,1}
        \foreach \e in {0,1}
            \node(l.F22.\d\e) at (1.2*\d + 8, 1.2*\e +6.2-0.4) {\d\e};
    \draw (-1+8,-1+6.2) -- (2.2+8,-1+6.2) -- (2.2+8,1.8+6.2) -- (-1+8,1.8+6.2) -- (-1+8,-1+6.2);
    \foreach \d/\e in {0/0, 1/0, 0/1}
        \node[circle, draw, fill=black, inner sep=0pt, minimum width=4pt](B.\d\e) at (1.2*\d + 8, 1.2*\e +6.2) {};
    \node[scale=1.5] (B) at (0.6+8,6.2+ 0.4) {$B$};

    \foreach \a in {0,1}
        \foreach \b in {0,1}
            \foreach \c in {0,1}
                \foreach \d in {0,1}
                    \foreach \e in {0,1}
                        \node[circle, draw, fill=black, inner sep=0pt, minimum width=1pt](F25.\a\b\c\d\e) at (7*\a + 3*\b + 1.2*\d,3*\c + 1.2*\e) {};
    \draw (-1,-1.2) -- (12.2,-1.2) -- (12.2,5) -- (-1,5) -- (-1,-1.2);
    \foreach \a in {0,1}
        \foreach \b in {0,1}
            \foreach \c in {0,1}
                \foreach \d in {0,1}
                    \foreach \e in {0,1}
                        \node (l.F25.\a\b\c\d\e) at (7*\a + 3*\b + 1.2*\d,3*\c + 1.2*\e - 0.4) {\a\b\c\d\e};
    \foreach \a/\b/\c in {1/0/0, 1/1/0, 0/1/0, 0/0/1}
        \foreach \d/\e in {0/0, 0/1, 1/0}
            \node[circle, draw, fill=black, inner sep=0pt, minimum width=4pt](AB.\a\b\c\d\e) at (7*\a + 3*\b + 1.2*\d,3*\c + 1.2*\e) {};
    \node[scale=1.5] (AB) at (5.6,2) {$A \times B$};
    \foreach \a/\b/\c in {1/0/0, 1/1/0, 0/1/0, 0/0/1}
        \draw (7*\a + 3*\b + 0.6,3*\c + 0.4) circle (1.5);

  \end{tikzpicture}
  \caption{Example showing the product of configurations $A \subseteq \mathbb F_2^3$ and $B \subseteq \mathbb F_2^2$.
  \label{fig:bound diagram}}
\end{figure}
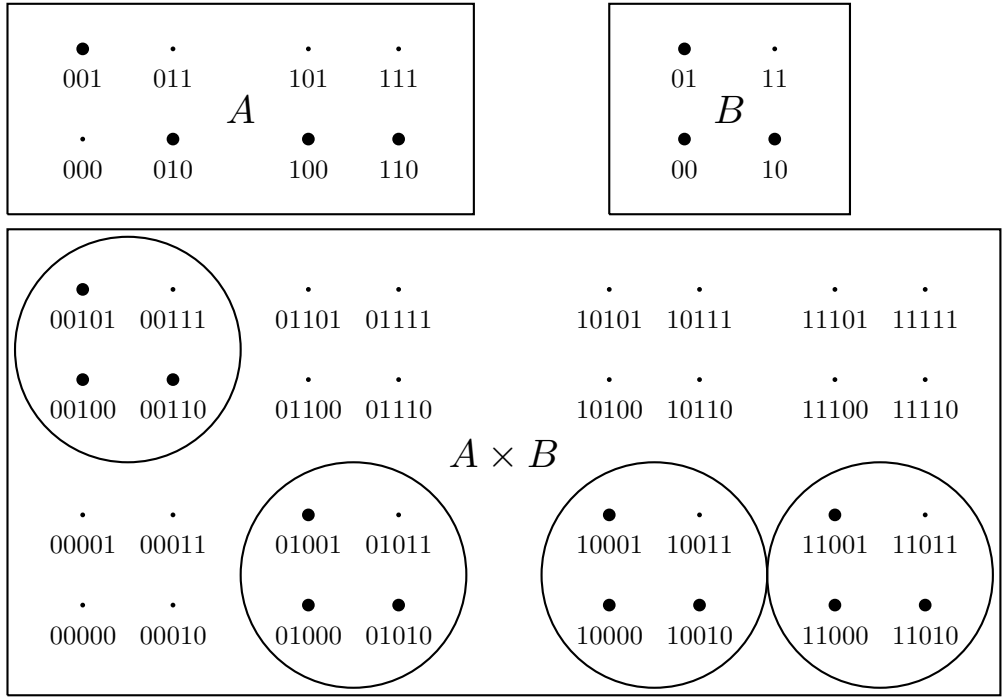  

For $A \subseteq \mathbb F_q^n$, define the \emph{direction set} of $A$ to be the set
\[A^{\to} := \{d \in \mathbb F_q^n : \text{there exists $x \in \mathbb F_q^n$ such that } x + \lambda d \in A \text{ for all } \lambda \in \mathbb F_q\}.\]
Note that for $q=2$, $A^{\to}$ is just the sumset $A + A = \{x + y : x,y \in A\}$.
Also note that the linear structure of $A^{\to}$ is entirely determined by the affine structure of $A$ since, for any $x \in \mathbb F_q^n$, the translate $A + x$ has the same direction set as $A$ does, and any linear isomorphism applied to $A$ will also preserve the linear structure of $A^\to$.
Additionally, it should be clear from the definitions that for any $A \subseteq \mathbb F_q^n, B \subseteq \mathbb F_q^m$, we have
\[(A \times B)^\to = A^\to \times B^\to.\]

We define a few more linear and affine invariants. For nonempty $A \subseteq \mathbb F_q^n$, let $\omega(A)$ denote the dimension of the largest linear subspace of $\mathbb F_q^n$ contained in $A \cup \{0\}$. Define $\omega^\to(A) := \omega(A^\to)$, which is determined by the affine structure of $A$. Let $\omegaaff(A)$ denote the dimension of the largest affine subspace of $\mathbb F_q^n$ contained in $A$. 
We also have
\[\omega(A) \leq \omegaaff(A) \leq \omega^\to(A)\]
since any linear subspace is an affine subspace, and the direction set of any affine subspace is a linear subspace of the same dimension. The following proposition shows that these invariants interact well with the product operation. Part (b) will be especially useful for our purposes.

\begin{proposition} \label{prop:product general}
Let $A \subseteq \mathbb F_q^n$ and $B \subseteq \mathbb F_q^m$ be nonempty. Then we have the following.
\begin{enumerate}[label=(\alph*)]
\item If $0 \in A$ and $0 \in B$, then $\omega(A \times B) = \omega(A) + \omega(B)$.
\item $\omega^\to(A \times B) = \omega^\to(A) + \omega^\to(B)$.
\item $\omegaaff(A \times B) = \omegaaff(A) + \omegaaff(B)$.
\item If $0 \in A$ and $0 \in B$, then $\rank(A \times B) = \rank(A) + \rank(B)$.
\item $\rankaff(A \times B) = \rankaff(A) + \rankaff(B) - 1$.
\end{enumerate}
\end{proposition}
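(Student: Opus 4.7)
My plan is to handle parts (a)--(c) uniformly by a projection plus rank--nullity argument, and then to deduce parts (d)--(e) from a direct decomposition of the linear span of $A \times B$. In all five parts the ``$\geq$'' direction is routine: products of linear (respectively affine) subspaces of the factors sit inside $(A \times B) \cup \{0\}$ (respectively $A \times B$), using the hypothesis $0 \in A, 0 \in B$ in parts (a) and (d) to handle the mixed slices, and dimensions of products add.

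For part (a), I would take a linear subspace $W \subseteq (A \times B) \cup \{0\}$ of dimension $\omega(A \times B)$ and consider the first-coordinate projection $\pi_1 : W \to \mathbb F_q^n$. Every nonzero $(x,y) \in W$ lies in $A \times B$, so $x \in A$; hence $\pi_1(W) \subseteq A \cup \{0\}$ and $\dim \pi_1(W) \leq \omega(A)$. The kernel $\ker(\pi_1|_W)$ consists of the pairs $(0,y) \in W$, and since $0 \in A$ each nonzero such pair forces $y \in B$, so $\{y : (0,y) \in W\}$ is a linear subspace of $B \cup \{0\}$ of dimension at most $\omega(B)$. Rank--nullity then gives the ``$\leq$'' inequality. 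Part (b) is immediate from (a) applied to $A^\to$ and $B^\to$, together with the identity $(A \times B)^\to = A^\to \times B^\to$ already recorded in the text and the observation that $0 \in A^\to, 0 \in B^\to$ whenever $A, B$ are nonempty.

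For part (c), I would repeat the projection argument in the affine setting: pick any $(a_0, b_0) \in W \subseteq A \times B$, translate $W$ by $-(a_0,b_0)$ to reduce to a linear subspace, and then bound $\dim \pi_1(W) \leq \omegaaff(A)$ (since $\pi_1(W) \subseteq A$ is affine) and the fiber dimension $\dim\{y : (a_0, y) \in W\} \leq \omegaaff(B)$ (since that fiber is an affine subspace of $B$). For part (d), under the hypothesis $0 \in A, 0 \in B$, each $(x,y) \in A \times B$ decomposes as $(x,0) + (0,y)$ with both summands again in $A \times B$; together with the reverse inclusion $A \times B \subseteq \spn(A) \times \spn(B)$ this yields $\spn(A \times B) = \spn(A) \times \spn(B)$, of dimension $\rank(A) + \rank(B)$. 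Part (e) then follows from (d) after translating $A, B$ to contain $0$ and using $\rankaff(X) = \rank(X - x_0) + 1$ for any $x_0 \in X$; the $-1$ correction records the fact that two basepoint dimensions collapse to one under the product.

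I do not anticipate a real obstacle here, as this proposition is essentially bookkeeping. The points that require care are (i) where the hypothesis $0 \in A, 0 \in B$ enters, namely to place the axial slices $A \times \{0\}$ and $\{0\} \times B$ inside $A \times B$ in the kernel step of (a) and in the span decomposition of (d), and (ii) the $-1$ in (e), which must be traced back to the extra basepoint contribution in $\rankaff$.
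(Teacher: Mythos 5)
Your proof is correct and takes essentially the same approach as the paper. For the upper bound in (a), the paper encloses $W$ in the product of the spans of the coordinate projections of a basis of $W$, which is just the basis-level form of your rank--nullity step; parts (b)--(e) are handled by the same translation reductions and basis count that you sketch.
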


\begin{proof}
We first prove (a). Let $k = \omega(A)$ and $\ell = \omega(B)$. Since $A$ contains a linear copy of $\mathbb F_q^k$ and $B$ contains a linear copy of $\mathbb F_q^\ell$, $A \times B$ contains a linear copy of $\mathbb F_q^k \times \mathbb F_q^\ell = \mathbb F_q^{k + \ell}$. Thus $\omega(A \times B) \geq k + \ell$.

Conversely, suppose that $W$ is a linear subspace in $A \times B$ with basis $\{(x_1,y_1), \ldots, (x_t, y_t)\}$, with $t = \omega(A \times B)$. Then $\spn\{x_1, \ldots, x_t\} \subseteq A$ and $\spn\{y_1, \ldots, y_t\}\subseteq B$, which means that $\rank\{x_1, \ldots, x_t\} \leq k$ and $\rank\{y_1, \ldots, y_t\} \leq \ell$. Now $W$ is contained in the $\leq k + \ell$ dimensional subspace of $\mathbb F_q^{n+m}$ spanned by $(x_1, 0), \ldots, (x_t, 0), (0, y_1), \ldots, (0,y_t)$, so 
\[\omega(A \times B) = \dim W \leq k + \ell.\]

Part (b) now follows. Since $0 \in A^\to$ and $0 \in B^\to$, we have by part (a) that
\begin{align*}
\omega((A \times B)^\to) &= \omega(A^\to \times B^\to) = \omega(A^\to) + \omega(B^\to).
\end{align*}

Now we prove (c). First, let $U$ be an affine space in $A$ of dimension $\omegaaff(A)$, and let $W$ be an affine space in $B$ of dimension $\omegaaff(B)$. Let $x \in U$ and $y \in W$. By translating $A$ by $-x$ and $B$ by $-y$, we may assume that $0 \in A$ and that $\omegaaff(A) = \omega(A)$, and also that $0 \in B$ and that $\omegaaff(B) = \omega(B)$. Now by part (a),
\[\omegaaff(A \times B) \geq \omega(A \times B) = \omega(A) + \omega(B) = \omegaaff(A) + \omegaaff(B).\]

Conversely, let $V$ be an affine space in $A \times B$ of dimension $\omegaaff(A \times B)$, and let $z' = (x',y') \in V$. Then $x' \in A$ and $y' \in B$, so by translating $A$ by $-x'$ and $B$ by $-y'$, we can assume that $0 \in A$, that $0 \in B$, and that $V$ is a linear space in $A \times B$. Thus by part (a),
\[\omegaaff(A \times B) = \omega(A \times B) = \omega(A) + \omega(B) \leq \omegaaff(A) + \omegaaff(B).\]

For part (d), note that, given a linear bases $\{x_1, \ldots, x_k\}$ for $A$ and a linear basis $\{y_1, \ldots, y_\ell\}$ for $B$, the set $\{(x_1, 0), \ldots, (x_k, 0), (0, y_1), \ldots, (0, y_\ell)\}$ is a linear basis for $A \times B$.

Finally, we consider part (e). By translating, we can assume $0 \in A$ and $0 \in B$, which implies $0 \in A \times B$ as well. Then $\rank(A) = \rankaff(A) - 1$, $\rank(B) = \rankaff(B) - 1$, and $\rank(A \times B) = \rankaff(A \times B) - 1$. Now by part (d), we have
\begin{align*}
\rankaff(A \times B) &= \rank(A \times B) + 1 \\
&= \rank(A) + \rank(B) + 1 \\
&= \rankaff(A) - 1 + \rankaff(B) - 1 + 1 \\
&= \rankaff(A) + \rankaff(B) - 1. \tag*{\qedhere}
\end{align*}
\end{proof}

\section{Homomorphic Supersaturation}\label{sec:homomorphic supersaturation}
We first prove a simple lemma that shows that the number of degenerate affine homomorphisms $B \to A$ is small compared to the total number of affine homomorphisms $B \to A$.

\begin{lemma}\label{lem:non-injective count}
    Let $B \subseteq \mathbb F_q^m$ and $A \subseteq \mathbb F_q^n$ be affine configurations, with $B$ nonempty. Write $r = \rankaff(B) \geq 1$, $N = q^n$, and $|A| = \alpha N$. Then the number of degenerate affine homomorphisms $B \to A$ is less than $(q \alpha N)^{r-1}$.
\end{lemma}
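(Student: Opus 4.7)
The plan is to parametrize affine homomorphisms $\varphi : B \to A$ by their values on a fixed affine basis of $B$, characterize degeneracy as a simple linear-algebraic property of the resulting $r$-tuple, and then bound the number of ``bad'' tuples directly.

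First, I would fix an affine basis $b_1, \ldots, b_r$ of $B$. Since any affine homomorphism extends uniquely from an affine basis, the assignment $\varphi \mapsto (\varphi(b_1), \ldots, \varphi(b_r))$ embeds the set of homomorphisms $B \to A$ into $A^r$. I claim that $\varphi$ is non-degenerate if and only if the tuple $(\varphi(b_1), \ldots, \varphi(b_r))$ is affinely independent. For the easy direction, if the tuple is independent, then it is an affine basis of $\varphi(B)$, and the map $\varphi(b_i) \mapsto b_i$ extends to an affine inverse of $\varphi$. Conversely, affine maps cannot increase affine rank, so if the tuple is dependent then $\rankaff(\varphi(B)) < r = \rankaff(B)$, which precludes $\varphi$ from being an isomorphism onto its image. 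Thus it suffices to bound the number of affinely dependent $r$-tuples in $A^r$.

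Next, I would count affinely dependent $r$-tuples by induction on $r$. Any dependent tuple $(a_1, \ldots, a_r)$ is of one of two types: either $(a_1, \ldots, a_{r-1})$ is already affinely dependent and $a_r$ ranges freely over $A$; or $(a_1, \ldots, a_{r-1})$ is affinely independent and $a_r$ lies in their affine span, which is an affine subspace of dimension $r-2$ and hence has exactly $q^{r-2}$ elements. Writing $f_r$ for the number of dependent $r$-tuples in $A^r$, this yields
\[
f_r \leq f_{r-1} \cdot |A| + |A|^{r-1} \cdot q^{r-2}.
\]
With base case $f_1 = 0$ (a single point is affinely independent), a routine induction gives
\[
f_r \leq (1 + q + q^2 + \cdots + q^{r-2})(\alpha N)^{r-1} = \frac{q^{r-1}-1}{q-1}(\alpha N)^{r-1} < q^{r-1}(\alpha N)^{r-1} = (q\alpha N)^{r-1},
\]
as desired (the strict inequality $\frac{q^{r-1}-1}{q-1} < q^{r-1}$ holds for $q \geq 2$, assuming $\alpha N > 0$).

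I do not expect any serious obstacle; the only subtlety is to insist on an affine basis (rather than an arbitrary spanning set) at the outset, so that the extension of $\varphi$ and the construction of its candidate inverse both behave correctly. Everything else is an elementary counting argument, which is why it fits naturally as a preliminary lemma before the main supersaturation estimates.
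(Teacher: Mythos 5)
Your proof is correct, and the first half — fixing an affine basis of $B$, embedding homomorphisms into $A^r$ via their values on the basis, and characterizing non-degeneracy as affine independence of the image tuple — is exactly the reduction the paper makes. The two proofs part ways only in how they bound the number of affinely dependent $r$-tuples in $A^r$: the paper observes that any dependent tuple satisfies one of $(q^{r-1}-1)/(q-1)$ nontrivial affine relations (counted up to scaling), and for each fixed relation one coordinate is determined by the other $r-1$, giving at most $(\alpha N)^{r-1}$ tuples per relation and a union bound of $\frac{q^{r-1}-1}{q-1}(\alpha N)^{r-1}$. Your induction on $r$, splitting on whether the first $r-1$ coordinates are already dependent or force $a_r$ into a fixed $(r-2)$-dimensional affine span, arrives at the identical bound by summing the geometric series $1 + q + \cdots + q^{r-2}$. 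The paper's version is a one-shot union bound and perhaps makes the source of the factor $q^{r-1}$ (number of relation-lines) slightly more transparent, whereas your inductive recurrence is a bit more mechanical but avoids having to reason about relations modulo scaling; both are elementary and equally tight.
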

\begin{proof}
    If $\{x_0, \ldots, x_{r-1}\}$ is an affine basis for $B$, then an affine homomorphism $f : B \to A$ is degenerate iff $\{f(x_0), \ldots, f(x_{r-1})\} \subseteq A$ is affinely dependent. There are $(q^{r-1}-1)/(q-1) < q^{r-1}$ possible nontrivial affine relations among the $r$ elements $f(x_0), \ldots, f(x_{r-1})$, up to scaling, each of the form $\sum_{i = 0}^{r-1} \lambda_i f(x_i),$ with $\sum_{i = 0}^{r-1} \lambda_i = 0$ and some $\lambda_i \neq 0$.
    Once such a relation is established, then the entire function $f$ is determined by the values it takes on $\{x_0, \ldots, x_{r-1}\} \setminus \{x_i\}$, so there are at most $(\alpha N)^{r-1}$ such $f$'s with the given relation. Altogether, this gives the desired count.
\end{proof}

We now show that the property of $B$ being $C$-weakly Sidorenko immediately gives an upper bound on the extremal number of $B$, and that affine configurations larger than the given bound have supersaturation of affine copies of $B$. In particular, when $B$ is Sidorenko, we have the strongest possible form of supersaturation of copies of $B$ for affine configurations $A \subseteq \mathbb F_q^n$ above a certain threshold, namely the same number asymptotically as a $p$-random subset of $\mathbb F_q^n$ with $p = |A|/q^n$.  

\begin{lemma}\label{lem:Sidorenko implies supersaturation}
    Let $B \subseteq \mathbb F_q^m$ be $C$-weakly Sidorenko, with $\rankaff(B) =: r \geq 1$. Then for every $n$,
    \[\exaff(n,B) < q^{n - (n-r+1)/(C-r+1)}.\]
    Moreover, if $A \subseteq \mathbb F_q^n$ with $|A| = Dq^{(1 - 1/(C-r+1))n}$ for some $D > 0$, then $A$ contains more than
    \[\left(1-\frac{q^{r-1}}{D^{C-r+1}}\right)\frac{\alpha^C N^r}{|\autaff(B)|}\]
    subsets affinely isomorphic to $B$, where $N =q^n$ and $|A| = \alpha N$.
\end{lemma}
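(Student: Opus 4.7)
The proof splits cleanly into two parts, with the extremal bound being an immediate consequence of the supersaturation statement, so the plan is to prove supersaturation first and then specialize.

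For supersaturation, the plan is to count affine homomorphisms $B \to A$ and classify them as degenerate or non-degenerate. By the $C$-weakly Sidorenko hypothesis applied to $A$, we have $\homaff(B,A) \geq \alpha^C N^r$. By \Cref{lem:non-injective count}, the number of degenerate affine homomorphisms is strictly less than $(q\alpha N)^{r-1}$. Subtracting, the number of non-degenerate affine homomorphisms $B \to A$ is strictly greater than $\alpha^C N^r - (q\alpha N)^{r-1}$. Since every non-degenerate homomorphism $B \to A$ is an affine isomorphism onto its image and any two affine isomorphisms with the same image differ by an element of $\autaff(B)$, each subset of $A$ affinely isomorphic to $B$ is hit by exactly $|\autaff(B)|$ non-degenerate homomorphisms. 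Dividing yields the intermediate bound
\[\#\{\text{affine copies of } B \text{ in } A\} > \frac{\alpha^C N^r - (q\alpha N)^{r-1}}{|\autaff(B)|}.\]

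The next step is a short arithmetic simplification using the hypothesis $|A| = \alpha N = Dq^{(1 - 1/(C-r+1))n}$, which gives $\alpha = D q^{-n/(C-r+1)}$ and hence $\alpha^{C-r+1} N = D^{C-r+1}$. Therefore
\[\frac{(q\alpha N)^{r-1}}{\alpha^C N^r} = \frac{q^{r-1}}{\alpha^{C-r+1} N} = \frac{q^{r-1}}{D^{C-r+1}},\]
so the displayed lower bound becomes exactly $\left(1 - \frac{q^{r-1}}{D^{C-r+1}}\right)\frac{\alpha^C N^r}{|\autaff(B)|}$, as required.

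For the extremal bound, the plan is to choose $D$ so that the bracketed factor vanishes but the strict inequality still forces at least one copy. Setting $D := q^{(r-1)/(C-r+1)}$ makes $q^{r-1}/D^{C-r+1} = 1$, so the count of copies is strictly greater than $0$, meaning that any $A \subseteq \mathbb F_q^n$ of size $Dq^{(1-1/(C-r+1))n} = q^{n - (n-r+1)/(C-r+1)}$ must contain an affine copy of $B$. This gives the claimed inequality $\exaff(n,B) < q^{n-(n-r+1)/(C-r+1)}$.

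I expect no real obstacle here; the proof is essentially bookkeeping once the two ingredients (weak Sidorenko and the degenerate-homomorphism count) are combined. The one step to handle carefully is the passage from homomorphisms to unlabeled copies via the factor $|\autaff(B)|$, where one should be explicit that non-degeneracy ensures the fibers over images are torsors for $\autaff(B)$ of the correct size; the rest is a direct substitution.
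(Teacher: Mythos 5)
Your proof is correct and follows the same route as the paper: count all affine homomorphisms via the $C$-weakly Sidorenko hypothesis, subtract the degenerate ones using \Cref{lem:non-injective count}, divide by $|\autaff(B)|$ to pass from non-degenerate homomorphisms to unlabeled copies, and then derive the extremal bound by observing that the bracketed factor $1 - q^{r-1}/D^{C-r+1}$ is nonnegative precisely when $|A| \geq q^{n-(n-r+1)/(C-r+1)}$. The only cosmetic difference is that the paper keeps $D$ as a free parameter and notes $\frac{q^{r-1}}{D^{C-r+1}} \le 1$ when $|A|$ meets the threshold, rather than specializing $D$ to the boundary value; your version implicitly covers sets at least that large by the same monotonicity, so the content is identical.
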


\begin{proof}
    We prove the supersaturation result first. If $A \subseteq \mathbb F_q^n$ has $|A| = Dq^{(1 - 1/(C-r+1))n}$, then by \Cref{lem:non-injective count}, the number of degenerate affine homomorphisms $B \to A$ is less than
    \[(q\alpha N)^{r-1} = \frac{q^{r-1}}{D^{C-r+1}}\alpha^C N^r.\]
    Since $B$ is $C$-weakly Sidorenko, we have more than
    \[\left(1 - \frac{q^{r-1}}{D^{C-r+1}}\right)\alpha^CN^r\]
    non-degenerate affine homomorphisms $B \to A$. 
    For each subset $B' \subseteq A$ which is affinely isomorphic to $B$, there are exactly $|\autaff(B)|$ non-degenerate affine homomorphisms mapping $B$ onto $B'$, so we must have more than
    \[\left(1 - \frac{q^{r-1}}{D^{C-r+1}}\right)\frac{\alpha^CN^r}{|\autaff(B)|}\]
    such subsets.
    
    In particular, if 
    \[|A| \geq q^{n - (n-r+1)/(C-r+1)},\]
    then
    \[\frac{q^{r-1}}{D^{C-r+1}} \leq 1,\]
    so $A$ must contain an affine copy of $B$, giving our desired bound on $\exaff(n,B)$.
\end{proof}

We now show that the property of being weakly Sidorenko is preserved under taking products.

\begin{proof}[Proof of \Cref{thm:Sidorenko product}]
    Let $A \subseteq \mathbb F_q^n$ with density $\alpha$, and let $N = q^n$. Fix respective affine bases $\{x_0, \ldots, x_{r_1-1}\}$ and $\{y_0, \ldots, y_{r_2-1}\}$ for $B_1$ and $B_2$. For $\mathbf u = (u_1, \ldots, u_{r_1-1}) \in (\mathbb F_q^n)^{r_1-1}$, we use $\spn_{B_1}(\mathbf u)$ to denote the set
    \[\left\{\sum_{i=1}^{r_1-1} \lambda_i u_i \in \mathbb F_q^n : x_0 + \sum_{i = 1}^{r_1-1} \lambda_i(x_i-x_0) \in B_1\right\}.\]
    We similarly define
    \[\spn_{B_2}(\mathbf v) := \left\{\sum_{i=1}^{r_2-1} \lambda_i v_i \in \mathbb F_q^n : y_0 + \sum_{i = 1}^{r_2-1} \lambda_i(y_i-y_0) \in B_2\right\}\]
    for $\mathbf v = (v_1, \ldots, v_{r_2-1}) \in (\mathbb F_q^n)^{r_2-1}$, and we further define
    \[A_{\mathbf v} := \{z \in \mathbb F_q^n : z + \spn_{B_2}(\mathbf v) \subseteq A\}.\]
    Note that
    \begin{align*}
        &\homaff(B_1, A) = \#\left\{(z, \mathbf u) \in \mathbb F_q^n \times (\mathbb F_q^n)^{r_1-1} : z + \spn_{B_1}(\mathbf u) \subseteq A\right\};\\
        &\homaff(B_2, A) = \#\left\{(z, \mathbf v) \in \mathbb F_q^n \times (\mathbb F_q^n)^{r_2-1} : z + \spn_{B_2}(\mathbf v) \subseteq A\right\}.
    \end{align*}
    We can thus express $\homaff(B_1 \times B_2, A)$ as
    \begin{align*}
        \homaff(B_1 \times B_2, A) &= \#\left\{(z, \mathbf u, \mathbf v) \in \mathbb F_q^n \times (\mathbb F_q^n)^{r_1-1} \times (\mathbb F_q^n)^{r_2-1} : z + \spn_{B_1}(\mathbf u) + \spn_{B_2}(\mathbf v) \subseteq A\right\} \\
        &= \#\left\{(z, \mathbf u, \mathbf v) \in \mathbb F_q^n \times (\mathbb F_q^n)^{r_1-1} \times (\mathbb F_q^n)^{r_2-1} : z + \spn_{B_1}(\mathbf u) \subseteq A_{\mathbf v}\right\} \\
        &= \sum_{\mathbf v \in (\mathbb F_q^n)^{r_2-1}} \#\left\{(z, \mathbf u) \in \mathbb F_q^n \times (\mathbb F_q^n)^{r_1-1} : z + \spn_{B_1}(\mathbf u) \subseteq A_{\mathbf v}\right\} \\
        &= \sum_{\mathbf v \in (\mathbb F_q^n)^{r_2-1}} \homaff(B_1, A_{\mathbf v}).
    \end{align*}
    Since $B_1$ is $C_1$-weakly Sidorenko, this is at least
    \begin{align*}
        \sum_{\mathbf v \in (\mathbb F_q^n)^{r_2-1}} \left(\frac{|A_{\mathbf v}|}{N}\right)^{C_1} N^{r_1} &= N^{r_1-C_1} \sum_{\mathbf v \in (\mathbb F_q^n)^{r_2-1}} |A_{\mathbf v}|^{C_1} \\
        &\geq N^{r_1-C_1}N^{r_2-1}\left(\frac{1}{N^{r_2-1}}\sum_{\mathbf v \in (\mathbb F_q^n)^{r_2-1}} |A_{\mathbf v}|\right)^{C_1}
    \end{align*}
    by Jensen's inequality. Since $B_2$ is $C_2$-weakly Sidorenko, and $\sum_{\mathbf v \in (\mathbb F_q^n)^{r_2-1}} |A_{\mathbf v}| = \homaff(B_2, A)$, we have
    \begin{align*}
        \homaff(B_1 \times B_2, A) &\geq N^{r_1-C_1}N^{r_2-1}\left(\frac{1}{N^{r_2-1}}\homaff(B_2, A)\right)^{C_1} \\
        &\geq N^{r_1-C_1}N^{r_2-1}\left(\alpha^{C_2}N\right)^{C_1} \\
        &= \alpha^{C_1C_2} N^{r_1+r_2-1}.
    \end{align*}
    By \Cref{prop:product general}, $\rankaff(B_1 \times B_2) = r_1 + r_2-1$, so this is our desired bound, and the proof is complete.
\end{proof}

\section{Unified Proofs of \Cref{thm:Bonin Qin} and \Cref{thm:Fox Pham}} \label{sec:recovering bq and fp}

The following is the same argument used in \cite{BQ00} and \cite{FP17}, but stated in our language in a unified and generalized way. For an affine configuration $B$ and a family $\mathcal F$ of affine configurations, we use $B \times \mathcal F$ to denote the family $\{B \times F : F \in \mathcal F\}$.

\begin{lemma}\label{lem:supersaturation implies product extremal}
    Let $B \subseteq \mathbb F_q^m$ with $r := \rankaff(B) \geq 1$, and let $\mathcal F$ be any family of affine configurations. Let $n \geq r \geq 1$, $N = q^n$, and $\exaff(n, B \times \mathcal F) = \alpha N$, and let $c(B, n,\alpha)$ denote the minimum number of non-degenerate affine homomorphisms $B \to A$ for an affine configuration $A \subseteq \mathbb F_q^n$ of density $\alpha$. Then
    \[c(B, n,\alpha) \leq q^{r-1}N^{r-1}\exaff(n-r+1, \mathcal F).\]
\end{lemma}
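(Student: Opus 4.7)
The plan is to take $A\subseteq\mathbb F_q^n$ to be a $(B\times\mathcal F)$-free subset of size $\exaff(n,B\times\mathcal F)=\alpha N$ and to upper bound the number of non-degenerate affine homomorphisms $B\to A$ for this $A$. Following the parametrization used in the proof of \Cref{thm:Sidorenko product}, I would fix an affine basis $x_0,\ldots,x_{r-1}$ of $B$: each non-degenerate $\varphi:B\to A$ is then determined uniquely by a pair $(z,\mathbf u)$ with $z=\varphi(x_0)$ and $\mathbf u=(\varphi(x_1)-z,\ldots,\varphi(x_{r-1})-z)$, where non-degeneracy is equivalent to $\mathbf u$ being linearly independent and $\varphi(B)\subseteq A$ is equivalent to $z+\spn_B(\mathbf u)\subseteq A$. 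Writing $T_{\mathbf u}:=\{z:z+\spn_B(\mathbf u)\subseteq A\}$, the count of non-degenerate affine homomorphisms $B\to A$ is exactly $\sum_{\mathbf u\text{ LI}}|T_{\mathbf u}|$, and since there are fewer than $N^{r-1}$ linearly independent tuples $\mathbf u$, the lemma reduces to proving
\[
|T_{\mathbf u}|\le q^{r-1}\exaff(n-r+1,\mathcal F)
\]
for each linearly independent $\mathbf u$.

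To establish this bound I would set $U:=\spn(\mathbf u)$, an $(r-1)$-dimensional linear subspace of $\mathbb F_q^n$, and let $\pi:\mathbb F_q^n\to\mathbb F_q^n/U\cong\mathbb F_q^{n-r+1}$ be the quotient map. Each coset of $U$ has $q^{r-1}$ elements, so partitioning $T_{\mathbf u}$ by cosets gives $|T_{\mathbf u}|\le q^{r-1}|\pi(T_{\mathbf u})|$, and it suffices to show that $\pi(T_{\mathbf u})\subseteq\mathbb F_q^n/U$ is $\mathcal F$-free, which would yield $|\pi(T_{\mathbf u})|\le\exaff(n-r+1,\mathcal F)$.

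For the $\mathcal F$-freeness of $\pi(T_{\mathbf u})$, suppose for contradiction that it contains a non-degenerate affine copy of some $F\in\mathcal F$ via a non-degenerate affine homomorphism $\bar\psi:F\to\pi(T_{\mathbf u})$. I would construct a non-degenerate affine copy of $B\times F$ in $A$, contradicting the $(B\times\mathcal F)$-freeness. Extend $\bar\psi$ to an affine map $\tilde\Psi:\mathbb F_q^{m'}\to\mathbb F_q^n/U$, take an affine lift $\tilde\psi:\mathbb F_q^{m'}\to\mathbb F_q^n$ of $\tilde\Psi$ satisfying $\tilde\psi(f)\in T_{\mathbf u}$ for every $f\in F$, and define the affine map $\Phi(x,y):=\tilde\beta(x)+\tilde\psi(y)$, where $\tilde\beta$ extends the affine map $B\to\spn_B(\mathbf u)$ sending $x_0\mapsto 0$ and $x_i\mapsto u_i$. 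Then $\Phi|_{B\times F}$ maps into $A$ (since $\tilde\psi(f)\in T_{\mathbf u}$ gives $\tilde\psi(f)+\spn_B(\mathbf u)\subseteq A$) and is injective (since $\bar\psi$ is), and a rank computation using \Cref{prop:product general}(e) shows that it achieves the affine rank $r+\rankaff(F)-1=\rankaff(B\times F)$. The crucial point in this rank computation is that the affine direction of $\tilde\psi(F)$ is transverse to $U$, which holds automatically because $\pi(\tilde\psi(F))=\bar\psi(F)$ has full affine rank $\rankaff(F)$. Hence $\Phi|_{B\times F}$ is a non-degenerate affine homomorphism $B\times F\to A$, as required.

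The main technical hurdle is guaranteeing the existence of the affine lift $\tilde\psi$ with $\tilde\psi(F)\subseteq T_{\mathbf u}$. This step is straightforward in the central case when $B$ is itself an affine subspace, for instance $B=\mathbb F_q^1$ as used in the Bonin--Qin and Fox--Pham arguments: there $\spn_B(\mathbf u)=U$, so $T_{\mathbf u}$ is automatically a union of cosets of $U$, and every affine lift of $\tilde\Psi$ lands in $T_{\mathbf u}$. For general $B$ the lift must be produced more carefully, exploiting the freedom in the individual lifts of $\bar\psi$ together with the coset-wise structure of $T_{\mathbf u}$ within the fibers of $\pi$; this is the step that I expect to require the most delicate argument.
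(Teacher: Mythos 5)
Your setup is correct and essentially matches the paper's: parametrizing non\-degenerate homomorphisms by pairs $(z,\mathbf u)$, noting that $\mathbf u$ must be linearly independent, defining $T_{\mathbf u}=\{z : z+\spn_B(\mathbf u)\subseteq A\}$, and reducing to bounding $|T_{\mathbf u}|$ for each of the fewer than $N^{r-1}$ choices of $\mathbf u$. The trouble is in how you then spend the factor $q^{r-1}$: you use it to bound the fiber sizes via $|T_{\mathbf u}|\le q^{r-1}|\pi(T_{\mathbf u})|$ (a trivial inequality) and then try to show $\pi(T_{\mathbf u})$ is $\mathcal F$-free. That claim is false in general, and the lifting obstruction you flag at the end is a genuine gap, not a technicality.

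Concretely, if $B$ is not itself an affine subspace then $\spn_B(\mathbf u)$ is a proper subset of $U:=\spn(\mathbf u)$, and $T_{\mathbf u}$ need not be a union of $U$-cosets. Take $q=2$, $B=\{0,e_1,e_2\}$ (so $\spn_B(\mathbf u)=\{0,u_1,u_2\}\subsetneq U$), and $F=\mathbb F_2^2$. Choose four points $z_1,z_2,z_3,z_4$ lying in four distinct cosets of $U$ with $z_1+z_2+z_3+z_4=u_1\neq 0$, and set $A=\bigcup_i (z_i+\spn_B(\mathbf u))$. Then $T_{\mathbf u}=\{z_1,\ldots,z_4\}$ and $\pi(T_{\mathbf u})$ is a full affine copy of $F$ (since the defect $u_1$ vanishes under $\pi$), yet $A$ contains no affine copy of $B\times F$ because $A$ carries the extra affine relation $z_2+z_3+z_4+(z_1+u_1)=0$, which $B\times F$ does not have. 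There is no affine lift $\tilde\psi$ of $\bar\psi$ landing in $T_{\mathbf u}$; the only available lifts are the $z_i$, and they do not form an affine copy of $F$. So the reduction ``$\pi(T_{\mathbf u})$ is $\mathcal F$-free'' is not salvageable as stated.

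The fix, which is what the paper does, is to spend the factor $q^{r-1}$ differently: fix a complement $W_{\mathbf u}$ with $\mathbb F_q^n=W_{\mathbf u}\oplus U$ and partition $T_{\mathbf u}$ by its intersections with the $q^{r-1}$ translates $W_{\mathbf u}^{(1)},\ldots,W_{\mathbf u}^{(q^{r-1})}$. Within a single translate $W_{\mathbf u}^{(j)}$, the restriction of $\pi$ is an affine bijection onto $\mathbb F_q^{n-r+1}$, so there is no lifting ambiguity at all: if $T_{\mathbf u}\cap W_{\mathbf u}^{(j)}$ contained an affine copy $F'$ of some $F\in\mathcal F$, then $\{z+b : z\in F',\, b\in\spn_B(\mathbf u)\}$ would be an affine copy of $B\times F$ in $A$. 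Hence each piece satisfies $|T_{\mathbf u}\cap W_{\mathbf u}^{(j)}|\le\exaff(n-r+1,\mathcal F)$, and summing over the $q^{r-1}$ translates and then over $\mathbf u$ gives the lemma. (The paper phrases the count via pigeonholing down to a single $\mathbf u$ and a single $j$, but the bound is the same.) In short: don't project and lift; pick a slice.
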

\begin{proof}
    Let $A \subseteq \mathbb F_q^n$ be affine $(B \times \mathcal F)$-free with density $\alpha$. Let $S$ be the set of non-degenerate affine homomorphisms $B \to A$, which has size at least $c(B, n,\alpha)$ by assumption. Fix an affine basis $\{x_0, \ldots, x_{r-1}\}$ for $B$. For each $f \in S$ and for each $1 \leq i \leq r-1$, define $u_i(f) := f(x_i)-f(x_0)$, and define
    \[\mathbf u(f) := (u_1(f), \ldots, u_{r-1}(f)).\]
    Note that the components of $\mathbf u(f)$ are linearly independent elements of $\mathbb F_q^n$ since $f$ is non-degenerate. For each ordered $(r-1)$-tuple $\mathbf u = (u_1, \ldots, u_{r-1}) \in (\mathbb F_q^n)^{r-1}$ with linearly independent components, let $S_{\mathbf u} = \{f \in S : \mathbf u(f) = \mathbf u\}$. By the Pigeonhole Principle, there exists some $\mathbf u = (u_1, \ldots, u_{r-1})$ with 
    \[|S_{\mathbf u}| \geq \frac{c(B, n,\alpha)}{N^{r-1}}.\]
    
    Now we choose a linear subspace $W_{\mathbf u} \subseteq \mathbb F_q^n$ of codimension $r-1$ with $\mathbb F_q^n = W_{\mathbf u} \oplus \spn\{u_1, \ldots, u_{r-1}\}$. We take $W_{\mathbf u}^{(1)}, \ldots, W_{\mathbf u}^{(q^{r-1})}$ to be the distinct translates of $W_{\mathbf u}$, and for each $1 \leq j \leq q^{r-1}$, we define
    \[S_{\mathbf u}^{(j)} = \{f \in S_{\mathbf u} : f(x_0) \in W_{\mathbf u}^{(j)}\}.\]
    Again, by the Pigeonhole Principle, there exists some $j$ with
    \[|S_{\mathbf u}^{(j)}| \geq \frac{c(B, n,\alpha)}{q^{r-1}N^{r-1}}.\]
    
    We now define 
    \[A_{\mathbf u}^{(j)} = \left\{f(x_0) : f \in S_{\mathbf u}^{(j)}\right\} \subseteq W_{\mathbf u}^{(j)}.\]
    Note that the map $S_{\mathbf u}^{(j)} \to A_{\mathbf u}^{(j)}$ given by $f \mapsto f(x_0)$ is a bijection with inverse
    \[y \mapsto \left[f(x_i) = \left\{\begin{array}{l l}
    y & \text{if $i=0$} \\
    y + u_i & \text{otherwise}
    \end{array}\right.\right].\]
    In particular,
    \[|A_{\mathbf u}^{(j)}| \geq \frac{c(B, n,\alpha)}{q^{r-1}N^{r-1}}.\]
    On the other hand, if we have an affine copy $F'$ of some member $F \in \mathcal F$ in $A_{\mathbf u}^{(j)}$, then 
    \[G := \left\{f(x) : x \in B, f \in S_{\mathbf u}^{(j)}, f(x_0) \in F' \right\}\]
    is an affine copy of $B \times F \in B \times \mathcal F$ in $A$, contrary to assumption. Thus $A_{\mathbf u}^{(j)} \subseteq W_{\mathbf u}^{(j)}$ is $\mathcal F$-free, so we have
    \[\frac{c(B, n,\alpha)}{q^{r-1}N^{r-1}} \leq |A_{\mathbf u}^{(j)}| \leq \exaff(n-r+1, \mathcal F). \qedhere\]
\end{proof}

We now apply \Cref{lem:supersaturation implies product extremal} iteratively to recover the results of Bonin-Qin and Fox-Pham.
\begin{proof}[Proof of \Cref{thm:Bonin Qin} and \Cref{thm:Fox Pham}]
For $q \in \{2,3\}$, let  $\sigma_q$ be as in \Cref{lem:affine lines Sidorenko}. We will prove by induction on $t$ that for all $n\geq t$,
\begin{equation}\label{eq:explicit extremal bound}
    \exaff(n,\mathbb F_q^t) < q^{n-n/((\sigma_q-1)\sigma_q^{t-1})+2}.
\end{equation}
For $t=1$, let $N = q^n$, and let $A \subseteq \mathbb F_q^n$ be $\mathbb F_q^1$-free of size $\alpha N = \exaff(n,\mathbb F_q^1)$. By \Cref{lem:affine lines Sidorenko}, there are at least $\alpha^{\sigma_q}N^2$ affine homomorphisms $\mathbb F_q^1 \to A$, all of which are degenerate since $A$ is $\mathbb F_q^1$-free. But the degenerate affine homomorphisms $\mathbb F_q^1 \to A$ are precisely the constant maps, so we have $\alpha^{\sigma_q}N^2 \leq \alpha N,$ and hence
\[\alpha N \leq N^{1-1/(\sigma_q-1)}.\]

Now assume $t \geq 2$. Let $N = q^n$, let $\exaff(n,\mathbb F_q^t) = \alpha N$, and let $A \subseteq \mathbb F_q^t$ be an affine configuration of density $\alpha$. Again, by \Cref{lem:affine lines Sidorenko}, there are at least $\alpha^{\sigma_q}N^2 - \alpha N$ non-degenerate affine homomorphisms $\mathbb F_q^1 \to A$. Therefore, by \Cref{lem:supersaturation implies product extremal}, with $B = \mathbb F_q^1$, $\mathcal F = \{\mathbb F_q^{t-1}\}$, and $c(B, n, \alpha) \geq \alpha^{\sigma_q}N^2 - \alpha N$, we have
\[\alpha^{\sigma_q}N^2 - \alpha N \leq qN\exaff(n-1,\mathbb F_q^{t-1}).\]
By the inductive hypothesis, this gives
\[\alpha^{\sigma_q}N^2 - \alpha N < qNq^{n-1 - (n-1)/((\sigma_q-1)\sigma_q^{t-2})+2} = q^{2+1/((\sigma_q-1)\sigma_q^{t-2})}N^{2 - 1/((\sigma_q-1)\sigma_q^{t-2})}.\]
We can assume that $\alpha^{\sigma_q}N^2 \geq 2\alpha N$, as otherwise the claim holds already. Now we have
\[\frac 12 \alpha^{\sigma_q} N^2 < q^{2+1/((\sigma_q-1)\sigma_q^{t-2})}N^{2 - 1/((\sigma_q-1)\sigma_q^{t-2})},\]
and hence
\[\alpha N < \left(2 q^{2+1/((\sigma_q-1)\sigma_q^{t-2})}\right)^{1/\sigma_q}N^{1- 1/((\sigma_q-1)\sigma_q^{t-1})} \leq q^2 N^{1-1/((\sigma_q-1)\sigma_q^{t-1})}. \qedhere\]
\end{proof}

We observe that our supersaturation results from \Cref{sec:homomorphic supersaturation} give an alternative proof of Theorems \ref{thm:Bonin Qin} and \ref{thm:Fox Pham}. This method additionally establishes a strong supersaturation result for affine subspaces of $\mathbb F_q$ for $q \in \{2,3\}$.

\begin{theorem}\label{thm:supersaturation of affine spaces}
    For $q \in \{2,3\}$, let $\sigma_q$ be as in the statement of \Cref{lem:affine lines Sidorenko}, and let $t \geq 0$. Then for any $n$,
    \[\exaff(n, \mathbb F_q^t) < q^{n-(n-t)/(\sigma_q^t-t)}.\]
    Moreover, if $A \subseteq \mathbb F_q^n$ with $|A| = Dq^{(1 - 1/(\sigma_q^t-t))n}$ for some $D > 0$, then $A$ contains more than
    \[\left(1-\frac{q^t}{D^{\sigma_q^t-t}}\right)\frac{\alpha^{\sigma_q^t}N^{t+1}}{|\autaff(\mathbb F_q^t)|}\]
    affine $t$-spaces, where $N = q^n$ and $|A| = \alpha N$.
\end{theorem}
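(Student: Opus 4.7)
The plan is to realize $\mathbb F_q^t$ as a $t$-fold product of affine lines $\mathbb F_q^1$ and combine the supersaturation machinery already developed in the paper. Specifically, I would argue in two steps: first establish that $\mathbb F_q^t$ is $\sigma_q^t$-weakly Sidorenko, and then apply \Cref{lem:Sidorenko implies supersaturation} to obtain both conclusions immediately.

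For the first step, write $\mathbb F_q^t = \mathbb F_q^1 \times \mathbb F_q^1 \times \cdots \times \mathbb F_q^1$ ($t$ factors). By \Cref{lem:affine lines Sidorenko}, $\mathbb F_q^1$ is $\sigma_q$-weakly Sidorenko for $q\in\{2,3\}$. A straightforward induction on $t$ using \Cref{thm:Sidorenko product} then shows $\mathbb F_q^t$ is $\sigma_q^t$-weakly Sidorenko: if $(\mathbb F_q^1)^{t-1}$ is $\sigma_q^{t-1}$-weakly Sidorenko, then $(\mathbb F_q^1)^{t-1} \times \mathbb F_q^1$ is $\sigma_q^{t-1}\cdot \sigma_q = \sigma_q^t$-weakly Sidorenko. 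Simultaneously, I would compute $\rankaff(\mathbb F_q^t)=t+1$ by induction via \Cref{prop:product general}(e): the base case $\rankaff(\mathbb F_q^1)=2$ is immediate, and the inductive step gives $\rankaff((\mathbb F_q^1)^{t-1} \times \mathbb F_q^1) = t + 2 - 1 = t+1$. (The $t=0$ case is trivial: $\mathbb F_q^0$ is a single point, which is $1$-weakly Sidorenko with affine rank $1$.)

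For the second step, apply \Cref{lem:Sidorenko implies supersaturation} with $B = \mathbb F_q^t$, $r = t+1$, and $C = \sigma_q^t$. The parameters match up as follows: $C - r + 1 = \sigma_q^t - t$, and so the extremal bound from that lemma becomes
\[\exaff(n,\mathbb F_q^t) < q^{n - (n-r+1)/(C-r+1)} = q^{n - (n-t)/(\sigma_q^t - t)},\]
which is exactly the first claimed inequality. For the supersaturation statement, if $|A| = Dq^{(1-1/(\sigma_q^t - t))n}$ with $\alpha = |A|/N$, then \Cref{lem:Sidorenko implies supersaturation} yields more than
\[\left(1 - \frac{q^{r-1}}{D^{C-r+1}}\right)\frac{\alpha^C N^r}{|\autaff(\mathbb F_q^t)|} = \left(1 - \frac{q^t}{D^{\sigma_q^t - t}}\right)\frac{\alpha^{\sigma_q^t} N^{t+1}}{|\autaff(\mathbb F_q^t)|}\]
subsets of $A$ affinely isomorphic to $\mathbb F_q^t$, exactly as required.

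There is no real obstacle here — the theorem is essentially a direct corollary of the product construction \Cref{thm:Sidorenko product} applied to \Cref{lem:affine lines Sidorenko}, fed into the generic extremal/supersaturation transfer of \Cref{lem:Sidorenko implies supersaturation}. The only thing one needs to be careful about is bookkeeping: confirming that $\rankaff(\mathbb F_q^t) = t+1$ (so that $r-1 = t$ and $C-r+1 = \sigma_q^t - t$) and that the exponents in the final bound line up with the statement.
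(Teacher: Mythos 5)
Your proof is correct and follows exactly the same route as the paper: express $\mathbb{F}_q^t$ as the $t$-fold product $(\mathbb{F}_q^1)^t$, use \Cref{lem:affine lines Sidorenko} together with \Cref{thm:Sidorenko product} to conclude $\mathbb{F}_q^t$ is $\sigma_q^t$-weakly Sidorenko with $\rankaff(\mathbb{F}_q^t) = t+1$, and then plug $C = \sigma_q^t$, $r = t+1$ into \Cref{lem:Sidorenko implies supersaturation}. The paper states this even more tersely, but the underlying argument and bookkeeping are identical.
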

\begin{proof}
    The claim is simply a special case of \Cref{lem:Sidorenko implies supersaturation}. By \Cref{lem:affine lines Sidorenko}, $\mathbb F_q^1$ is $\sigma_q$-weakly Sidorenko, so by \Cref{thm:Sidorenko product}, $\mathbb F_q^t$ is $\sigma_q^t$-weakly Sidorenko, with $\rankaff(\mathbb F_q^t) = t+1$. 
\end{proof}

We take a moment to compare our supersaturation results to a supersaturation result of Gijswijt (\cite{G21} Proposition 22). His result applies to any affine configuration $B \subseteq \mathbb F_q^m$ whose $n$-th affine extremal number is bounded above by $(q^{1-\delta})^n$ for some constant $\delta > 0$. He proves that affine configurations in $\mathbb F_q^n$ with density $\alpha \gg q^{-\delta n}$ have $\Omega(\alpha^{(r-1+2\delta)/\delta}N^r)$ affine copies of $B$, where $r = \rankaff(B)$ and $N = q^n$. Our result \Cref{lem:Sidorenko implies supersaturation} improves this count to $\Omega(\alpha^{(\delta(r-1) + 1)/\delta} N^r)$ affine copies of $B$ when $B$ is $C$-weakly Sidorenko and we take $\delta = 1/(C-r+1)$. In particular, for $q \in \{2,3\}$, Gijswijt's result guarantees only $\alpha^{(1 + o(1))t\sigma_q^t}N^{t+1}$ affine $t$-spaces when $|A|$ is above the Bonin-Qin threshold (for $q=2$) or Fox-Pham threshold (for $q=3$). Now \Cref{thm:supersaturation of affine spaces} improves this to $\Omega(\alpha^{\sigma_q^t}N^{t+1})$ affine $t$-spaces, which is tight for $q = 2$ by considering a random affine configuration of density $\alpha$.

\section{Proof of \Cref{thm:diagonal Fq k colors}}
We first prove a general upper bound for the two-color Ramsey number $R_q(s,t)$ for $q \in \{2,3\}$, from which \Cref{thm:diagonal Fq k colors} is easily derived. The proof uses nothing more than \Cref{thm:Bose Burton} and our explicit forms of \Cref{thm:Bonin Qin} and \Cref{thm:Fox Pham} for bounds on $\exaff(n,\mathbb F_q^t)$.
\begin{theorem}\label{thm:diagonal Fq two colors}
    For $q \in \{2,3\}$, let $\sigma_q$ be as in \Cref{lem:affine lines Sidorenko}. For any $t \geq s \geq 2$, $R_q(s,t)$ is at most a tower of height $s$ of the form
    \[R_q(s,t) \leq  \sigma_q^{\sigma_q^{\iddots^{\sigma_q^{2t}}}}.\]
\end{theorem}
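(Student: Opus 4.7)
The plan is an induction on $s \geq 2$, with \Cref{thm:off-diagonal Fq} serving as the base case $s=2$: it gives $R_q(2,t) = O(t\sigma_q^t)$, which lies comfortably below the target $\sigma_q^{\sigma_q^{2t}}$.

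For the inductive step, I would fix a 2-coloring of $\sqbinom{\mathbb F_q^n}{1}$ with no blue $t$-space, let $A \subseteq \mathbb F_q^n \setminus \{0\}$ denote the (projectively determined) set of red vectors, and set $T := R_q(s-1,t)$. Applying \Cref{thm:Bose Burton} to the blue 1-subspaces forces $|A| \geq q^{n-t+1}-1$, i.e.\ density at least $\sim q^{-t+1}$ in $\mathbb F_q^n$. Plugging this density into the explicit extremal bound \eqref{eq:explicit extremal bound}, the set $A$ must contain an affine $T$-dimensional subspace $U = v_0 + W_0$ as soon as $n \geq O(t\sigma_q^T)$; since $0 \notin A$ we have $0 \notin U$, and hence $v_0 \notin W_0$.

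Next I would restrict the 2-coloring to $\sqbinom{W_0}{1}$. The Ramsey property in $W_0 \cong \mathbb F_q^T$ with $T = R_q(s-1,t)$ yields either a blue $t$-subspace of $W_0$ (and we are done) or a red $(s-1)$-subspace $U^* \subseteq W_0$. In the second case, define $V^* := U^* \oplus \mathbb F_q v_0$; since $v_0 \notin W_0 \supseteq U^*$, this has dimension $s$. Its 1-subspaces split into those contained in $U^*$ (red by choice of $U^*$) and those spanned by a vector of the form $u + v_0$ with $u \in U^*$; for the latter, $u + v_0 \in v_0 + W_0 = U \subseteq A$, and since $A$ is projectively determined, every such 1-subspace is also red. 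Thus $V^*$ is a red $s$-subspace, contradicting our assumption.

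This establishes a recursion $R_q(s,t) \leq O\big(t\sigma_q^{R_q(s-1,t)}\big)$, so each inductive step adds one layer of $\sigma_q$-exponentiation to the tower. The polynomial factor $O(t)$ accrued at each level is absorbed by the $2t$ sitting at the top: each additional $\sigma_q$-exponentiation vastly overwhelms any fixed polynomial correction, and the base case from \Cref{thm:off-diagonal Fq} already provides enormous slack relative to $\sigma_q^{\sigma_q^{2t}}$, so the induction closes after a routine verification (one may strengthen the induction hypothesis by a factor of $t+1$ to make the bookkeeping clean). I expect the main conceptual obstacle to be the geometric assembly in the inductive step: namely, recognizing that the linear direction $W_0$ of an affine copy of $\mathbb F_q^T$ inside the red vectors is exactly the right ambient space to apply the inductive hypothesis, because an inductively produced red $(s-1)$-subspace of $W_0$ combines with any $v_0 \in U$ to build an $s$-dimensional linear subspace whose ``new'' 1-subspaces are forced to be red by the projective determination of $A$.
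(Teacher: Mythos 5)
Your argument is correct and is essentially the paper's own proof: Bose--Burton lower-bounds the red density, the extremal bound on $\exaff(n,\mathbb F_q^T)$ with $T=R_q(s-1,t)$ locates a red affine $T$-space $U=v_0+W_0$, the Ramsey property inside the direction space $W_0$ produces a red $(s-1)$-subspace, and projective determination lifts this together with $v_0$ to a red $s$-subspace, giving the recursion $R_q(s,t)\leq O\big(t\sigma_q^{R_q(s-1,t)}\big)$ that is then unwound via iterated logarithms. The only cosmetic difference is the base case: you invoke \Cref{thm:off-diagonal Fq} for $s=2$ (which is proved later in the paper, though not circularly), whereas the paper starts the induction at $s=1$ with the trivial $R_q(1,t)=t$ and lets the exact recursion $R_q(s,t)\leq t\sigma_q^{R_q(s-1,t)}$ handle $s=2$ as well, which avoids the forward reference and keeps the constants clean enough to verify the $\sigma_q^{\iddots^{\sigma_q^{2t}}}$ bound directly.
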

\begin{proof}
    We induct on $s$ for fixed $t$. Clearly, we have $R_q(1,t) = t$. Now for $s \geq 2$, let $r = R_q(s-1,t)$, and let $n = t\sigma_q^r$. Suppose we have a projectively determined red-blue coloring of $\mathbb F_q^n \setminus \{0\}$ with red set $R$ and blue set $B$ satisfying $\omega(R) < s$ and $\omega(B) < t$. By \Cref{thm:Bose Burton}, the fact that $\omega(B) < t$ implies that $|R| \geq q^{n-t+1} - 1$, with equality if and only if $R \cup \{0\}$ is a linear $(n-t+1)$-space. But $\omega(R) < s \leq n-t+1$, so we must have $|R| \geq q^{n-t+1}$. Also, by \Cref{thm:supersaturation of affine spaces},
    \[\exaff(n,\mathbb F_q^r) \leq q^{n-(n-r)/(\sigma_q^r-r)} \leq q^{n-n/\sigma_q^r} = q^{n-t} < |R|,\]
    so $R$ contains an affine $r$-space $A$.  Note that $0 \notin A$ since $0 \notin R$. Let $W$ be the translate of $A$ containing $0$, which is a linear $r$-space. Then by our choice of $r$ and because we've assumed $\omega(B) < t$, there exists a linear $(s-1)$-space $U' \subseteq W$ with $U' \setminus \{0\}$ entirely red. Now because the coloring is projectively determined, for any $u \in A$ and $\lambda \in \mathbb F_q \setminus \{0\}$, the set $W + \lambda u = \lambda A$ is entirely red. But then $U := \spn\{U',u\}$ is a linear $s$-space contained in $U' \cup \bigcup_{\lambda \in \mathbb F_q \setminus \{0\}} \lambda A$, so $U \setminus \{0\}$ is entirely red, a contradiction. Thus
    \[R_q(s,t) \leq n = t\sigma_q^r.\]

    By induction, $R_q(s,t)$ is at most a tower of height $s$ of the form
    \[R_q(s,t) \leq t\sigma_q^{t\sigma_q^{\iddots^{t\sigma_q^t}}}.\]
    
    To obtain the friendlier-looking bound stated in the theorem, it suffices to show that $\log_{\sigma_q}^{(s-1)}(R_q(s,t)) \leq 2t$, where $\log_b^{(k)}(x)$ denotes the $k$-th iterated logarithm (to base $b$) of $x$, defined by
    \[\log_b^{(k)}(x) := \left\{\begin{array}{l l}
    x & \text{if } k = 0, \\
    \log_b\left(\log_b^{(k-1)}(x)\right) & \text{if } k \geq 1 \text{ and } \log_b^{(k-1)}(x) > 0, \\
    -\infty & \text{if } k \geq 1 \text{ and } \log_b^{(k-1)}(x) \leq 0.
    \end{array}\right.\]
    First note that
    \[\log_{\sigma_q}(R_q(s,t)) \leq \log_{\sigma_q}\left(t\sigma_q^{t\sigma_q^{\iddots^{t\sigma_q^t}}}\right) = \log_{\sigma_q} t + t\sigma_q^{t\sigma_q^{\iddots^{t\sigma_q^t}}} \leq 2t\sigma_q^{t\sigma_q^{\iddots^{t\sigma_q^t}}},\]
    where the height of the tower on the right is $s-1$. Now applying the logarithm again gives
    \[\log_{\sigma_q}^{(2)}\left(R_q(s,t)\right) \leq \log_{\sigma_q}(2t) + t\sigma_q^{t\sigma_q^{\iddots^{t\sigma_q^t}}} \leq 2t\sigma_q^{t\sigma_q^{\iddots^{t\sigma_q^t}}},\]
    where the height is now $s-2$. Continuing in this fashion, we obtain $\log_{\sigma_q}^{(s-2)}(R_q(s,t)) \leq 2t\sigma_q^t$, and thus 
    \[\log_{\sigma_q}^{(s-1)}(R_q(s,t)) \leq \log_{\sigma_q}(2t) + t \leq 2t\]
    since $t \geq 2$.
\end{proof}

\begin{proof}[Proof of \Cref{thm:diagonal Fq k colors}]
    For $q \in \{2,3\}$, let $\sigma_q$ be as in \Cref{lem:affine lines Sidorenko}. We induct on $k$. The base case $k=2$ is given by \Cref{thm:diagonal Fq two colors}.
    
    For $k \geq 3$, we use the simple recurrence
    \[R_q(t_1, \ldots, t_k) \leq R_q(t_1,\ldots, t_{k-2}, R_q(t_{k-1}, t_k)).\]
    Indeed, consider a partition $\mathbb F_q^n \setminus \{0\} = B_1 \cup \cdots \cup B_k$ with $n = R_q(t_1, \ldots, t_{k-2}, R_q(t_{k-1}, t_k))$, where each set $B_i$ is projectively determined. If $\omega(B_i) < t_i$ for all $i \leq k-2$, then we must have $\omega(B_{k-1} \cup B_k) \geq R_q(t_{k-1}, t_k)$, and so $\omega(B_i) \geq t_i$ for some $i \geq k-1$. 
    
    With this observation, we obtain by the inductive hypothesis that
    \[R_q(t_1, \ldots, t_k) \leq R_q(t_1,\ldots, t_{k-2}, R_q(t_{k-1}, t_k)) \leq \sigma_q^{\sigma_q^{\iddots^{\sigma_q^{3R_q(t_{k-1}, t_k)}}}},\]
    where the height of the tower is $\sum_{i = 1}^{k-2}(t_i-1) + 1$.
    Now by \Cref{thm:diagonal Fq two colors}, 
    \[\log_{\sigma_q}^{(t_{k-1}-1)}(R_q(t_{k-1}, t_k)) \leq 2t_k,\]
    which implies
    \[\log_{\sigma_q}^{(t_{k-1}-1)}(3R_q(t_{k-1}, t_k)) \leq 2t_k + \log_{\sigma_q} 3 \leq 3t_k\]
    since $t_k \geq 2$. This completes the inductive step.
\end{proof}

\section{A Reformulation of $R_q(2,t)$}

We now reformulate the off-diagonal Ramsey problem as an affine extremal problem. We look at the $\mathbb F_2$ case first for the sake of exposition. Consider the \emph{sumset} of $A \subseteq \mathbb F_2^n$, defined as 
\[A + A := \{x + y : x,y \in A\},\]
and let $m_2(t)$ be the minimum $n$ such that every set $A \subseteq \mathbb F_2^n$ of size at least $2^{n-t+1}$ satisfies $\omega(A+A) \geq t$; that is, $A+A$ contains a linear $t$-space. Nelson and Nomoto \cite{NN21} observed that $m_2(t)$ is an upper bound for $R_2(2,t)$ for all $t \geq 2$ (see \Cref{lem:extremal implies Ramsey} for the argument). One way to bound $m_2(t)$ from above is via the following theorem of Sanders \cite{S10}.

\begin{theorem}[Sanders]\label{thm:Sanders}
    Let $A$ be a subset of $\mathbb F_2^n$ of density $\alpha < 1/2$. Then
    \[\omega(A+A) \geq n - \left\lceil n/\log_2 \frac{2-2\alpha}{1-2\alpha} \right\rceil.\]
\end{theorem}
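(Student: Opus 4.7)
The plan is to approach this via Fourier analysis on $\mathbb F_2^n$. The condition $v \in A+A$ is equivalent to the convolution $1_A * 1_A(v) = |A \cap (A+v)|$ being positive, which, by Fourier inversion on $\mathbb F_2^n$, I can write as
\[1_A * 1_A(v) = 2^{-n} \sum_\xi \widehat{1_A}(\xi)^2 (-1)^{\xi \cdot v}.\]
The term $\xi = 0$ contributes the main term $|A|^2/2^n = \alpha^2 \cdot 2^n > 0$. My goal is to find a subspace $V$ on which the contribution of the remaining characters cannot overwhelm this main term, so that $V \subseteq A + A$.

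The natural candidate is $V = \spn(\text{Spec}_\eta(A))^\perp$ for a threshold $\eta$ to be optimized, where $\text{Spec}_\eta(A) := \{\xi : |\widehat{1_A}(\xi)| \geq \eta|A|\}$ is the $\eta$-large spectrum. For $v \in V$, every character $\xi$ in the large spectrum evaluates to $(-1)^{\xi \cdot v} = +1$ and contributes non-negatively to the sum. The contribution of the small-spectrum complement can then be bounded via Parseval together with the condition $|\widehat{1_A}(\xi)| < \eta|A|$ for $\xi \notin \text{Spec}_\eta(A)$. To upper bound the codimension of $V$, that is, $\dim \spn(\text{Spec}_\eta(A))$, I would invoke Chang's theorem, which gives a bound of order $O(\eta^{-2} \log(1/\alpha))$.

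The main obstacle is that a naive Chang-plus-Parseval implementation only yields the rough order of magnitude, in the spirit of Bogolyubov's theorem, and not Sanders' sharp quantitative bound with the precise denominator $L = \log_2\!\big((2-2\alpha)/(1-2\alpha)\big)$ tailored to the density $\alpha$. Matching this sharp form requires a more refined approach: Sanders' almost-periodicity machinery based on the Croot-Sisask lemma, combined with a careful density-increment strategy that tracks where the Fourier mass concentrates at each iteration. The delicate part is trading off the threshold $\eta$ against the $L^p$ almost-periodicity parameters so that the final codimension bound is exactly $\lceil n/L\rceil$ rather than the looser bound given by direct Chang.
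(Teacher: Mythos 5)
Theorem~\ref{thm:Sanders} is quoted from Sanders~\cite{S10} and is \emph{not} proved in this paper, so there is no internal proof to compare your attempt against. Judged on its own terms, your proposal is not a proof: you yourself flag that the Chang-plus-Parseval route does not reach the stated codimension $\lceil n/\log_2\frac{2-2\alpha}{1-2\alpha}\rceil$, and your subsequent appeal to Croot--Sisask almost-periodicity is a gesture, not an argument. Your self-diagnosis of the first route is correct and worth spelling out: taking $V = \spn(\mathrm{Spec}_\eta(A))^\perp$ and bounding the small-spectrum contribution by Parseval only yields
\[
2^{-n}\sum_{\xi \notin \mathrm{Spec}_\eta(A)} \widehat{1_A}(\xi)^2 \leq 2^{-n}\sum_{\xi \neq 0} \widehat{1_A}(\xi)^2 = \alpha(1-\alpha)2^n,
\]
which has to be compared against the main term $\alpha^2 2^n$; this gives nothing beyond $\alpha \le 1/2$, because the small spectrum can carry essentially all the nonzero $\ell^2$ mass. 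This is precisely why Bogolyubov-type arguments work with $2A-2A$ (fourth powers of Fourier coefficients) rather than $A+A$.

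The remedy you propose, however, is a mismatched tool. The Croot--Sisask almost-periodicity machinery and Sanders' Bogolyubov--Ruzsa lemma are tailored to the \emph{fourfold} sumset $A+A+A+A$, where they give codimension bounds that are polylogarithmic in $1/\alpha$ and independent of $n$. The twofold sumset $A+A$ is qualitatively different: Green and Ruzsa showed $A+A$ can avoid subspaces/Bohr sets of any bounded codimension, which is exactly why the bound in Theorem~\ref{thm:Sanders} must depend on $n$. The argument in \cite{S10} is a direct Fourier-analytic density-increment iteration (pass to a hyperplane where the density increases by an explicit factor, iterate until the density exceeds $1/2$, at which point $A+A$ fills the ambient space), and the precise constant $\log_2\frac{2-2\alpha}{1-2\alpha}$ falls out of tracking that increment exactly. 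So even if you carried out the Croot--Sisask program, it would not produce the bound stated here; you would need to switch to the density-increment framework.
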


Taking $\alpha = 2^{1-t}$ and $n = (t+1)2^t$, and noting that $n - \left\lceil n/\log_2 \frac{2-2\alpha}{1-2\alpha} \right\rceil \geq \alpha n/2 - 1 = t$ for this choice of parameters, \Cref{thm:Sanders} gives $m_2(t) \leq n$. This is how \Cref{thm:Nelson Nomoto} is proved in \cite{NN21}.

Alternatively, we can take an affine extremal approach to bound $m_2(t)$, based on the simple observation that $\omega(A+A) \geq t$ if and only if $A$ contains an affine copy $B'$ of some affine configuration $B$ with $\omega(B+B) \geq t$. Indeed, as noted in \Cref{sec:preliminaries}, $\omega(B+B) = \omega^\to(B)$ is entirely determined by the affine structure of $B$, so $\omega(B'+B') = \omega(B+B)$. Therefore, if we define
\[\mathcal B_2^t := \{B \subseteq \mathbb F_2^m : m \geq 1,\, \omega(B+B) \geq t\},\]
then we have the alternative description of $m_2(t)$ as the minimum $n$ such that $\exaff(n,\mathcal B_2^t) < 2^{n-t+1}$. We see that this is finite by \Cref{thm:Furstenberg Katznelson}, and in fact, \Cref{thm:Bonin Qin} immediately implies an improvement of \Cref{thm:Nelson Nomoto} by a constant factor. Note that any set $A$ which \textit{properly} contains an affine $(t-1)$-space has $\omega(A+A) \geq t$, so using the explicit bound in (\ref{eq:explicit extremal bound}), we have
\[\exaff(n, \mathcal B_2^t) \leq \exaff(n, \mathbb F_2^{t-1}) < 2^{(1-2^{2-t})n + 2}\]
for $n \geq t$.
In particular, if $n = (t+1)2^{t-2}$, then $\exaff(n, \mathcal B_2^t) < 2^{n-t+1}$, so
\[R_2(2,t) \leq m_2(t) \leq (t+1)2^{t-2}.\]
We obtain further improvements on $R_2(2,t)$ by finding better upper bounds for $\exaff(n, \mathcal B_2^t)$.

More generally, we define for an arbitrary finite field $\mathbb F_q$
\[\mathcal B_q^t := \{B \subseteq \mathbb F_q^m : m \geq 1, \omega^\to(B) \geq t\},\]
and we define $m_q(t)$ to be the minimum $n$ such that $\exaff(n, \mathcal B_q^t) < q^{n-t+1}$. Equivalently, $m_q(t)$ is the minimum $n$ such that $\omega^\to(A) \geq t$ for every $A \subseteq \mathbb F_q^n$ of size at least $q^{n-t+1}$. We have the following.

\begin{lemma}\label{lem:extremal implies Ramsey}
    Let $\mathbb F_q$ be any finite field. Then $R_q(2,t) \leq m_q(t)$ for all $t \geq 2$.
\end{lemma}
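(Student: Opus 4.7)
The plan is to fix $n := m_q(t)$ and an arbitrary projectively determined red-blue coloring of $\mathbb F_q^n \setminus \{0\}$ with color classes $R$ and $B$; assuming $\omega(B) < t$, I would deduce $\omega(R) \geq 2$. First I would apply \Cref{thm:Bose Burton} to the set of blue $1$-subspaces, which by hypothesis contains no entirely-blue linear $t$-subspace. This gives $|R| \geq q^{n-t+1} - 1$, with equality if and only if $R \cup \{0\}$ is itself a linear $(n-t+1)$-subspace. To rule out the equality case, I would record the mild bound $m_q(t) \geq t+1$ for all $t \geq 2$: the $1$-subspace $\mathbb F_q e_1 \subseteq \mathbb F_q^t$ has size $q = q^{t-t+1}$ and direction set $\mathbb F_q e_1$ of dimension $1 < t$, so $\exaff(t,\mathcal B_q^t) \geq q$ and hence $m_q(t) > t$. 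In particular $n - t + 1 \geq 2$, so if Bose-Burton's equality case occurs then $R \cup \{0\}$ is a linear subspace of dimension at least $2$, giving $\omega(R) \geq 2$ at once.

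In the remaining case $|R| \geq q^{n-t+1}$, the definition of $m_q(t)$ applied directly to $A := R$ produces $\omega^\to(R) \geq t$, so I would fix a linear $t$-subspace $V \subseteq R^\to$. Since $V$ is linear and the coloring is projectively determined, every $1$-subspace of $V$ is monochromatic. If all of them are blue then $V \setminus \{0\} \subseteq B$ is a blue linear $t$-subspace, contradicting $\omega(B) < t$. Otherwise some $v \in V \setminus \{0\}$ lies in $R$, and by $v \in R^\to$ I can pick $x \in \mathbb F_q^n$ with $x + \lambda v \in R$ for every $\lambda \in \mathbb F_q$. Because $0 \notin R$, writing $x = \mu v$ for some $\mu \in \mathbb F_q$ would force $0 = x + (-\mu)v \in R$, a contradiction; thus $x \notin \mathbb F_q v$, so $\{x, v\}$ is linearly independent and $U := \langle x, v \rangle$ is a $2$-dimensional linear subspace. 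A quick check using projective-determinism shows $U \setminus \{0\} \subseteq R$: a nonzero element $\alpha x + \beta v \in U$ is either a nonzero scalar multiple of $v \in R$ when $\alpha = 0$, or equals $\alpha\bigl(x + (\beta/\alpha)v\bigr)$, a nonzero scalar multiple of $x + (\beta/\alpha)v \in R$, when $\alpha \neq 0$. Hence $\omega(R) \geq 2$, as desired.

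The only subtlety worth flagging is the Bose-Burton equality case, which is why the initial reduction to $|R| \geq q^{n-t+1}$ relies on the easy observation $m_q(t) > t$. Everything else is a direct translation between the projective coloring setup and the affine direction-set definition of $m_q(t)$, using projective-determinism to promote a single red direction together with an affine line in $R$ to a full red linear $2$-subspace.
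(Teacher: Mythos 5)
Your proof is correct and follows essentially the same route as the paper's: apply Bose--Burton to the blue class to get $|R| \ge q^{n-t+1}$ (handling the equality case via $m_q(t) \ge t+1$), invoke the definition of $m_q(t)$ to get $\omega^\to(R) \ge t$, find a red direction $v \in R^\to \cap R$ by pigeonhole against $\omega(B) < t$, and promote an affine line in direction $v$ to a red linear $2$-space using projective-determinism. The only cosmetic differences are that you establish $m_q(t) > t$ by exhibiting the $\mathcal B_q^t$-free line $\mathbb F_q e_1 \subseteq \mathbb F_q^t$ rather than the paper's hyperplane argument, and you treat the Bose--Burton equality case as directly yielding $\omega(R) \ge 2$ instead of ruling it out by contradiction; both choices are fine.
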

\begin{proof}
    Let $n = m_q(t)$. First, we show that $n \geq t+1$. Let $H$ be a linear hyperplane in $\mathbb F_q^n$, which satisfies $\omega^\to(H) = n-1$. Since $|H| = q^{n-1} \geq q^{n-t+1}$, we have $\omega^\to(H) \geq t$ by our choice of $n$.
    
    Now suppose we have a projectively determined red-blue coloring of $\mathbb F_q^n \setminus \{0\}$ with red set $R$ and blue set $B$ satisfying $\omega(R) < 2$ and $\omega(B) < t$. Since $\omega(B) < t$, we have by \Cref{thm:Bose Burton} that $|R| \geq q^{n-t+1}-1$, with equality iff $R \cup \{0\}$ is a linear $(n-t+1)$-space. But $n-t+1 \geq 2 > \omega(R)$, so we can't have equality, and hence $|R| \geq q^{n-t+1}$. By our choice of $n$, $\omega^\to(R) \geq t > \omega(B)$, so there exists some nonzero $d \in R^\to \setminus B$. That is, $d \in R^\to \cap R$. Let $a \in R$ be such that $a + \lambda d \in R$ for every $\lambda \in \mathbb F_q$. Note that $a$ and $d$ are linearly independent since $0 \notin R$. Therefore, since $R$ is projectively determined, $\spn\{a,d\}$ is a linear $2$-space contained in $R \cup \{0\}$, contradicting that $\omega(R) < 2$.
\end{proof}

We can now use the machinery from \Cref{sec:homomorphic supersaturation} to prove \Cref{thm:off-diagonal Fq}.
\begin{proof}[Proof of \Cref{thm:off-diagonal Fq}]
    Let $t \geq 1$, and let $k = \lceil t/4 \rceil$. Consider the affine configuration $C_6 \subseteq \mathbb F_2^4$, defined prior to \Cref{lem:affine lines Sidorenko}. Suppose we have two pairs $\{x, y\}, \{x', y'\} \in \binom{C_6}{2}$ with $x + y = x' + y'$. Since every $4$ distinct elements of $C_6$ are affinely independent, we must have that $x,y,x',y'$ are not distinct. Then $x=x'$ without loss of generality, which implies $y=y'$ as well. Thus we have $\binom 62 = 15$ distinct nonzero sums $x + y \in \mathbb F_2^4$ for $x,y \in C_6$ with $x \neq y$, which means that $C_6 + C_6 = \mathbb F_2^4$, and we have $\omega^\to(C_6) = 4$. By \Cref{prop:product general}, $\omega^\to(C_6^k) = 4k \geq t$ and $\rankaff(C_6^k) = 4k+1$. Recall that $\mathcal B_2^t = \{B : \omega^{\to}(B) \geq t\}$, so $C_6^k \in \mathcal B_2^t$. Furthermore, by \Cref{lem:affine lines Sidorenko} and \Cref{thm:Sidorenko product}, $C_6^k$ is Sidorenko, so by \Cref{lem:Sidorenko implies supersaturation},
    \[\exaff(n,\mathcal B_2^t) \leq \exaff(n,C_6^k) < 2^{n-(n-4k)/(6^k-4k)} = 2^{n-t+1}\]
    for $n = (t-1)(6^k-4k)+4k$. Thus
    \[R_2(2,t) \leq m_2(t) \leq (t-1)(6^k-4k)+4k = O\big(t 6^{t/4}\big)\]
    by \Cref{lem:extremal implies Ramsey}.

    Similarly, since $\omega^\to(\mathbb F_3^t) = t$, we have by \Cref{thm:supersaturation of affine spaces} that
    \[\exaff(n, \mathcal B_3^t) \leq \exaff(n,\mathbb F_3^t) < 2^{n-(n-t)/(C_0^t-t)} = 2^{n-t+1}\]
    for $n = (t-1)(C_0^t-t)+t$, with $C_0 \approx 13.901$ as in \Cref{thm:Fox Pham}. Thus
    \[R_3(2,t) \leq m_3(t) \leq (t-1)(C_0^t-t)+t = O\big(tC_0^t\big)\]
    by \Cref{lem:extremal implies Ramsey}.
\end{proof}

We remark that the leading constants in our bounds for \Cref{thm:off-diagonal Fq} are not optimized. Bounding the extremal numbers of $\mathcal B_2^t$ and $\mathcal B_3^t$ via iterative application of \Cref{lem:supersaturation implies product extremal} gives the best results, but the computations are slightly more cumbersome.

We state a generalization of this argument, which can be used to further improve our off-diagonal Ramsey bounds by establishing homomorphic supersaturation of affine configurations. Unfortunately, this technique by itself can never give a subexponential bound for $R_q(2,t)$. Indeed, for any affine configuration $B \subseteq \mathbb F_q^m$ with $\omega^\to(B) \geq 1$, the map $f : B^2 \to \mathbb F_q^m$ given by $f(x,y) = y-x$ has $B^\to$ in its image so $|B^\to| \leq |B|^2$. Also, for any linear configuration $A$, we have $|A| \geq q^{\omega(A)}$, and hence 
\[|B|^{1/\omega^\to(B)} \geq |B^\to|^{1/(2\omega^\to(B))} \geq q^{1/2}.\]
Thus the best possible upper bound that can come directly from \Cref{thm:Sidorenko implies Ramsey} is $\Omega\big(tq^{t/2}\big)$.

\begin{theorem}\label{thm:Sidorenko implies Ramsey}
    Suppose that $B \subseteq \mathbb F_q^m$ is $C$-weakly Sidorenko, and let $p = \omega^\to(B) \geq 1$. Then as $t \to \infty$,
    \[R_q(2,t) = O\left(tC^{t/p}\right).\]
\end{theorem}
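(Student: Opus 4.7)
The plan is to follow exactly the template used in the proof of \Cref{thm:off-diagonal Fq}, with the given $C$-weakly Sidorenko configuration $B$ playing the role that $C_6$ and $\mathbb F_3^1$ played there. Set $k := \lceil t/p \rceil$ and consider the product $B^k \subseteq \mathbb F_q^{km}$. Iterating \Cref{thm:Sidorenko product} shows that $B^k$ is $C^k$-weakly Sidorenko, and by \Cref{prop:product general}(b), $\omega^\to(B^k) = kp \geq t$, so $B^k$ lies in $\mathcal B_q^t$. Writing $r := \rankaff(B)$ and $r_k := \rankaff(B^k)$, \Cref{prop:product general}(e) gives $r_k = k(r-1) + 1$.

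Next, I would feed $B^k$ into \Cref{lem:Sidorenko implies supersaturation} to obtain
\[\exaff(n, B^k) < q^{n - (n - r_k + 1)/(C^k - r_k + 1)},\]
and then choose $n := (t-1)(C^k - r_k + 1) + r_k - 1$, which makes the exponent on the right-hand side equal to $n - t + 1$. This yields $\exaff(n, \mathcal B_q^t) \leq \exaff(n, B^k) < q^{n - t + 1}$, so $m_q(t) \leq n$ by the definition of $m_q(t)$, and then $R_q(2, t) \leq m_q(t) \leq n$ by \Cref{lem:extremal implies Ramsey}.

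The remaining task is the asymptotic estimate. Treating $B$ (and hence $C$, $p$, $r$) as fixed, $k \leq t/p + 1$ gives $C^k \leq C \cdot C^{t/p}$, while $r_k = k(r-1) + 1 = O(t)$. Therefore
\[n = (t-1)(C^k - r_k + 1) + r_k - 1 = O(tC^{t/p}),\]
which is the desired bound.

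I do not anticipate any serious obstacle here: the theorem is essentially the observation that the ingredients developed in \Cref{sec:homomorphic supersaturation} and used in \Cref{sec:recovering bq and fp} compose cleanly as soon as one has a single weakly Sidorenko configuration with positive $\omega^\to$. The only minor points to verify are that the denominator $C^k - r_k + 1$ is positive -- which follows from the standing inequality $C \geq |B| \geq r$ noted just before \Cref{lem:affine lines Sidorenko} -- and that the chosen $n$ is large enough to make the supersaturation bound non-vacuous, which it is for all sufficiently large $t$.
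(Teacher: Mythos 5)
Your proposal is correct and follows essentially the same route as the paper's proof: both set $k = \lceil t/p\rceil$, use \Cref{thm:Sidorenko product} and \Cref{prop:product general} to establish that $B^k$ is $C^k$-weakly Sidorenko with $\omega^\to(B^k) = kp \geq t$ and $\rankaff(B^k) = k(r-1)+1$, plug into \Cref{lem:Sidorenko implies supersaturation}, and then choose the same $n$ and invoke \Cref{lem:extremal implies Ramsey}. The only addition you make beyond the paper is the (accurate) remark that $C^k - r_k + 1 > 0$ follows from $C \geq |B| \geq r$.
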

\begin{proof}
    Let $n \geq t \geq 2$, and let $k = \lceil t/p \rceil$ and $r = \rankaff(B)$. By \Cref{thm:Sidorenko product} and \Cref{prop:product general}, $B^k$ is $C^k$-weakly Sidorenko with $\omega^\to(B^k) = kp \geq t$ and $\rankaff(B^k) = (r-1)k+1$. Therefore, by \Cref{lem:Sidorenko implies supersaturation}, 
    \[\exaff(n, \mathcal B_q^{t}) \leq \exaff(n,B^k) < q^{n-(n-(r-1)k)/(C^k-(r-1)k)}\]
    If we take
    \[n = (t-1)(C^k-(r-1)k)+(r-1)k = O(tC^{t/p}),\]
    then $\exaff(n,\mathcal B_q^t) < q^{n-t+1}.$ By \Cref{lem:extremal implies Ramsey}, $R_q(2,t) \leq m_q(t) \leq n$.
\end{proof}

\section{Concluding Remarks}
We believe $m_q(t) = \min\{n : \exaff(n, \mathcal B_q^t) < q^{n-t+1}\}$ to be polynomial in $t$ for all $q$, which would imply that $R_q(2,t)$ is also polynomial by \Cref{lem:extremal implies Ramsey}. For $q=2$, this was asked by Peter Nelson~\cite{PN2022} in the second Barbados graph theory workshop 2022 (Problem 17), and this remains open. For $q \neq 2,3$, it is unknown whether $m_q(t)$ is even bounded by an exponential function. Such a bound would follow immediately from exponential improvements on the affine extremal number of $\mathbb F_q^1$ by \Cref{lem:supersaturation implies product extremal}, combined with the aforementioned supersaturation result of Gijswijt (\cite{G21}, Proposition 22).
In particular, if it is true that $\exaff(n,\mathbb F_q^1) \leq (q^{1-\delta})^n$ for some $\delta > 0$, then we immediately obtain
\[R_q(2,t) \leq m_q(t) = O\left(t(2 + 1/\delta)^t\right).\]

It is also worth mentioning the natural relationship of affine extremal numbers to \emph{affine Ramsey numbers}. We use $\Raff{q}(t_1, \ldots, t_k)$ to denote the minimum $n$ such that for every $k$-coloring $f : \mathbb F_q^n \to [k]$ of the points of $\mathbb F_q^n$, there exist $i \in [k]$ and an affine subspace $U \subseteq \mathbb F_q^n$ of dimension $t_i$, such that $U$ is monochromatic in color $i$. If $t_1 = \cdots = t_k = t$, we write $\Raff{q}(t_1, \ldots, t_k) = \Raff{q}(t;k)$. Such Ramsey numbers clearly exist by \Cref{thm:Furstenberg Katznelson} since the majority color class, say color $i$, has size at least $q^n/k$, which is greater than $\exaff(n,\mathbb F_q^{t_i})$ for large $n$. In fact, any general upper bound for $\exaff(n, \mathbb F_q^t)$ immediately implies upper bounds for affine Ramsey numbers. For $q \in \{2,3\}$, Theorems \ref{thm:Bonin Qin} and \ref{thm:Fox Pham} give
\[\Raff{q}(t;k) \leq (\log_2 k)\sigma_q^t \qquad \text{for all $k \geq 2$, $t \geq 1$;}\]
\[\Raff{q}(s,t) \leq (\log_q \sigma_q)(\sigma_q-1)\sigma_q^{s-1} t \qquad \text{for all $s$ fixed, $t$ large,}\]
where $\sigma_q$ is as in \Cref{lem:affine lines Sidorenko}. Upper bounds on Hales-Jewett numbers (see \cite{S88}, for example) also imply upper bounds on affine Ramsey numbers for general $q$, though these are of a much larger order of growth. For lower bounds, straightforward applications of the Lov\'asz Local Lemma give the following:
\[\Raff{q}(t;k) \geq (\log_q k)\frac{q^t}{t} \qquad \text{for all $k$ fixed, $t$ large};\]
\[\Raff{q}(s,t) \geq \left(\frac{q^s - 1}{s} - o(1)\right) t \qquad \text{for all $s$ fixed, 
 as $t \to \infty$.} \]
 It would be interesting to see new methods develop for obtaining upper bounds on affine Ramsey numbers.

\section{Acknowledgements}
We would like to thank Peter Nelson for posing the problem of improving upper bounds on $R_2(2,t)$ in the Barbados graph theory workshop 2022 which motivated most of our work here. The authors would like to also thank Tom Sanders for stimulating discussions on the topic.
\bibliography{literature}

\end{document}